\DeclareMathOperator{\sign}{sgn}
\DeclareMathOperator{\supp}{supp}
\DeclareMathOperator{\diam}{diam}
\DeclareMathOperator{\dist}{dist}
\begin{document}
\newcommand{\done}[2]{\dfrac{d {#1}}{d {#2}}}
\newcommand{\donet}[2]{\frac{d {#1}}{d {#2}}}
\newcommand{\pdone}[2]{\dfrac{\partial {#1}}{\partial {#2}}}
\newcommand{\pdonet}[2]{\frac{\partial {#1}}{\partial {#2}}}
\newcommand{\pdonetext}[2]{\partial {#1}/\partial {#2}}
\newcommand{\pdtwo}[2]{\dfrac{\partial^2 {#1}}{\partial {#2}^2}}
\newcommand{\pdtwot}[2]{\frac{\partial^2 {#1}}{\partial {#2}^2}}
\newcommand{\pdtwomix}[3]{\dfrac{\partial^2 {#1}}{\partial {#2}\partial {#3}}}
\newcommand{\pdtwomixt}[3]{\frac{\partial^2 {#1}}{\partial {#2}\partial {#3}}}
\newcommand{\bs}[1]{\mathbf{#1}}
\newcommand{\bx}{\mathbf{x}}
\newcommand{\by}{\mathbf{y}}
\newcommand{\bd}{\mathbf{d}} 
\newcommand{\bn}{\mathbf{n}} 
\newcommand{\bP}{\mathbf{P}} 
\newcommand{\bp}{\mathbf{p}} 
\newcommand{\ol}[1]{\overline{#1}}
\newcommand{\rf}[1]{(\ref{#1})}
\newcommand{\xt}{\mathbf{x},t}
\newcommand{\hs}[1]{\hspace{#1mm}}
\newcommand{\vs}[1]{\vspace{#1mm}}
\newcommand{\eps}{\varepsilon}
\newcommand{\ord}[1]{\mathcal{O}\left(#1\right)} 
\newcommand{\oord}[1]{o\left(#1\right)}
\newcommand{\Ord}[1]{\Theta\left(#1\right)}
\newcommand{\PhiF}{\Phi_{\rm freq}}
\newcommand{\real}[1]{{\rm Re}\left[#1\right]} 
\newcommand{\im}[1]{{\rm Im}\left[#1\right]}
\newcommand{\hsnorm}[1]{||#1||_{H^{s}(\bs{R})}}
\newcommand{\hnorm}[1]{||#1||_{\tilde{H}^{-1/2}((0,1))}}
\newcommand{\norm}[2]{\left\|#1\right\|_{#2}}
\newcommand{\normt}[2]{\|#1\|_{#2}}
\newcommand{\on}[1]{\Vert{#1} \Vert_{1}}
\newcommand{\tn}[1]{\Vert{#1} \Vert_{2}}
\newcommand{\ts}{\tilde{s}}
\newcommand{\tGamma}{{\tilde{\Gamma}}}
\newcommand{\darg}[1]{\left|{\rm arg}\left[ #1 \right]\right|}
\newcommand{\bnabla}{\boldsymbol{\nabla}}
\newcommand{\dive}{\boldsymbol{\nabla}\cdot}
\newcommand{\curl}{\boldsymbol{\nabla}\times}
\newcommand{\Phixy}{\Phi(\bx,\by)}
\newcommand{\PhiOxy}{\Phi_0(\bx,\by)}
\newcommand{\dxPhixy}{\pdone{\Phi}{n(\bx)}(\bx,\by)}
\newcommand{\dyPhixy}{\pdone{\Phi}{n(\by)}(\bx,\by)}
\newcommand{\dxPhiOxy}{\pdone{\Phi_0}{n(\bx)}(\bx,\by)}
\newcommand{\dyPhiOxy}{\pdone{\Phi_0}{n(\by)}(\bx,\by)}

\newcommand{\rd}{\mathrm{d}}
\newcommand{\R}{\mathbb{R}}
\newcommand{\N}{\mathbb{N}}
\newcommand{\Z}{\mathbb{Z}}
\newcommand{\C}{\mathbb{C}}
\newcommand{\K}{{\mathbb{K}}}
\newcommand{\ri}{{\mathrm{i}}}
\newcommand{\re}{{\mathrm{e}}} 

\newcommand{\cA}{\mathcal{A}}
\newcommand{\cC}{\mathcal{C}}
\newcommand{\cS}{\mathcal{S}}
\newcommand{\cD}{\mathcal{D}}
\newcommand{\cone}{{c_{j}^\pm}}
\newcommand{\ctwo}{{c_{2,j}^\pm}}
\newcommand{\cthree}{{c_{3,j}^\pm}}

\newtheorem{thm}{Theorem}[section]
\newtheorem{lem}[thm]{Lemma}
\newtheorem{defn}[thm]{Definition}
\newtheorem{prop}[thm]{Proposition}
\newtheorem{cor}[thm]{Corollary}
\newtheorem{rem}[thm]{Remark}
\newtheorem{conj}[thm]{Conjecture}
\newtheorem{ass}[thm]{Assumption}
\newcommand{\tk}{k}
\newcommand{\tbx}{\tilde{\bx}}
\newcommand{\tby}{\tilde{\by}}	%
\newcommand{\tbd}{\tilde{\bd}}
\newcommand{\txi}{\xi}
\newcommand{\bxi}{\boldsymbol{\xi}}
\newcommand{\boldeta}{\boldsymbol{\eta}}	%
\newcommand{\sD}{\mathsf{D}}
\newcommand{\sN}{\mathsf{N}}
\newcommand{\sE}{\mathsf{E}}
\newcommand{\sS}{\mathsf{S}}
\newcommand{\sT}{\mathsf{T}}
\newcommand{\sH}{\mathsf{H}}
\newcommand{\sI}{\mathsf{I}}
\renewcommand{\cA}{A}
\newcommand{\cB}{\mathcal{B}}
\newcommand{\cH}{\mathcal{H}}
\newcommand{\cI}{\mathcal{I}}
\newcommand{\cItilde}{\tilde{\mathcal{I}}}
\newcommand{\cIhat}{\hat{\mathcal{I}}}
\newcommand{\cIcheck}{\check{\mathcal{I}}}
\newcommand{\cIstar}{{\mathcal{I}^*}}
\newcommand{\cJ}{\mathcal{J}}
\newcommand{\cM}{\mathcal{M}}
\newcommand{\cT}{\mathcal{T}}
\newcommand{\scrD}{\mathscr{D}}
\newcommand{\scrS}{\mathscr{S}}
\newcommand{\scrJ}{\mathscr{J}}
\newcommand{\tH}{\tilde{H}}
\newcommand{\Hze}{H_{\rm ze}}
\newcommand{\dimH}{{\rm dim_H}}
\newcommand{\dudnjump}{\left[ \pdone{u}{\bn}\right]}
\newcommand{\dudnjumptext}{[ \pdonetext{u}{\bn}]}
\newcommand{\sumpm}[1]{\{\!\!\{#1\}\!\!\}}
\newtheorem{example}[thm]{Example}
\title[Acoustic scattering by planar screens]{Wavenumber-explicit continuity and coercivity estimates in acoustic scattering by planar screens}

\author{S.\ N.\ Chandler-Wilde}
\address{Department of Mathematics and Statistics\\ University of Reading \\ 
UK.}
\email{s.n.chandler-wilde@reading.ac.uk}
\author{D.\ P.\ Hewett}
\address{Department of Mathematics and Statistics\\ University of Reading \\ 
UK. \\
Current address:\\ Mathematical Institute\\ University of Oxford\\ %
UK.}
\email{hewett@maths.ox.ac.uk}
\begin{abstract}
We study the classical first-kind boundary integral equation reformulations of time-harmonic acoustic scattering by planar sound-soft (Dirichlet) and sound-hard (Neumann) screens. We prove continuity and coercivity of the relevant boundary integral operators (the acoustic single-layer and hypersingular operators respectively) in appropriate fractional Sobolev spaces, with wavenumber-explicit bounds on the continuity and coercivity constants.  Our analysis, which requires no regularity assumptions on the boundary of the screen (other than that the screen is a relatively open bounded subset of the plane), is based on spectral representations for the boundary integral operators, and builds on results of Ha-Duong (Jpn J Ind Appl Math 7:489--513 (1990) and Integr Equat Oper Th 15:427--453 (1992)). %
\end{abstract}
\subjclass{65R20, 35Q60}
\keywords{Boundary integral equations, Wave scattering, Screen problems, Single-layer operator, Hypersingular operator}
\maketitle
\section{Introduction}
\label{sec:Intro}
This paper concerns the mathematical analysis of a class of time-harmonic acoustic scattering problems modelled by the Helmholtz equation %
\begin{equation} \label {eq:he}
\Delta u + k^2 u = 0,
\end{equation}
where $u$ is a complex scalar function and $k>0$ is the {\em wavenumber}.
We study the reformulation of such scattering problems in terms of boundary integral equations (BIEs), proving continuity and coercivity estimates for the associated boundary integral operators (BIOs) which are explicit in their $k$-dependence.

Our focus is on scattering by a thin planar screen occupying some bounded and relatively open set $\Gamma \subset \Gamma_\infty:=\{\bx=(x_1,...,x_n)\in \R^n:x_n=0\}$ (we assume throughout
that $n=2$ or $3$), with \eqref{eq:he} assumed to hold in the domain $D:=\R^n\setminus \overline{\Gamma}$. 
We consider both the Dirichlet (sound-soft) and Neumann (sound-hard) boundary value problems (BVPs), which we now state.
The function space notation in the following definitions, and the precise sense in which the boundary conditions are to be understood, is explained in \S\ref{sec:prelims}.

\begin{defn}[Problem $\sD$]
\label{def:SPD}
Given $g_{\sD}\in H^{1/2}(\Gamma)$, find $u\in C^2\left(D\right)\cap  W^1_{\mathrm{loc}}(D)$ such that
\vs{-2}
\begin{align}
 \Delta u+k^2u  &=  0, \;\;\quad\mbox{in }D,  \label{eqn:HE1} \\
u  &= g_{\sD}, \quad\mbox{on }\Gamma,\label{eqn:bc1}
\end{align}
and $u$ satisfies the Sommerfeld radiation condition at infinity. %
\end{defn}
\begin{defn}[Problem $\sN$]
\label{def:SPN}
Given $g_{\sN}\in H^{-1/2}(\Gamma)$, find $u\in C^2\left(D\right)\cap W^1_{\mathrm{loc}}(D)$ such that
\vs{-2}
\begin{align}
  \Delta u+k^2u  &=  0, \;\;\quad \mbox{in }D,  \label{eqn:HE2} \\
 \pdone{u}{\bn}  &= g_{\sN}, \quad \mbox{on }\Gamma, \label{eqn:bc2}%
\end{align}
and $u$ satisfies the Sommerfeld radiation condition at infinity. %
\end{defn}

\begin{example}
\label{ex:scattering}
Consider the problem of scattering by $\Gamma$ of an incident
plane wave
\begin{align}
\label{uiDef}
  u^i(\bx)&:=\re^{\ri  k \bx\cdot \bd}, \qquad \bx\in\mathbb{R}^n,%
\end{align}
where
$\bd\in\mathbb{R}^n$ is a unit direction vector.
A `sound-soft' and a `sound-hard' screen are modelled respectively by problem $\sD$ (with $g_{\sD}=-u^i|_\Gamma$) and problem $\sN$ (with $g_{\sN}=-\pdonetext{u^i}{\bn}|_\Gamma$). In both cases $u$ represents the scattered field, the total field being given by $u^i+u$.
\end{example}

Such scattering problems have been well-studied, both theoretically \cite{StWe84,Ste:87,WeSt:90,Ha-Du:90,Ha-Du:92,HoMaSt:96} and in applications \cite{DLaWroPowMan:98,DavisChew08}. It is well known (see, e.g., \cite{StWe84,Ste:87,WeSt:90,HoMaSt:96}) that problems $\sD$ and $\sN$ are uniquely solvable for all $g_{\sD}\in H^{1/2}(\Gamma)$ and $g_{\sN}\in H^{-1/2}(\Gamma)$, provided that $\Gamma\subset \Gamma_\infty$ is sufficiently smooth. Many of the references cited above assume $\Gamma\subset \Gamma_\infty$ is $C^\infty$ smooth, or do not explicitly specify the regularity of $\Gamma$, but, as has been clarified recently in \cite{CoercScreen}, unique solvability holds whenever $\Gamma$ is Lipschitz (in the sense considered e.g.\ in \cite[p.~90]{McLean}). The case of non-Lipschitz $\Gamma$ can also be considered (for details see \cite{CoercScreen}), but in general requires problems $\sD$ and $\sN$ to be supplemented with additional assumptions in order to guarantee uniqueness. For simplicity of presentation we do not consider such generalisations here, and restrict our discussion of the BVPs $\sD$ and $\sN$ to the case of Lipschitz $\Gamma$. However, our results on the associated BIOs (which are the main focus of this paper) are valid for arbitrary relatively open $\Gamma$, as will be clarified at the start of \S\ref{subsec:Outline}.

The solutions of problems $\sD$ and $\sN$ can be represented respectively in terms of the single and double layer potentials (for notation and definitions see \S\ref{sec:prelims})
\begin{align*}
\label{}
\cS_k:\tH^{-1/2}(\Gamma)\to C^2(D)\cap W^1_{\rm loc}(D),\qquad \cD_k:\tH^{1/2}(\Gamma)\to C^2(D)\cap W^1_{\rm loc}(D),
\end{align*}
which for
$\phi\in\scrD(\Gamma)$
and $\bx\in D$
have the following integral representations:%
\begin{align}
\label{SkPotIntRep}
\cS_k\phi (\bx) = \int_\Gamma \Phi(\bx,\by) \phi(\by)\, \rd s(\by), %
\qquad
\cD_k\phi (\bx) = \int_\Gamma \pdone{\Phi(\bx,\by)}{\bn(\by)} \phi(\by)\, \rd s(\by), %
\end{align}
where $\Phi(\bx,\by)$ denotes the fundamental solution of \rf{eq:he},
\begin{align}
\label{FundSoln}
\Phi(\bx,\by) :=
\begin{cases}
\dfrac{\re^{\ri k|\bx-\by|}}{4\pi |\bx-\by|}, & n=3,\\[3mm]
\dfrac{\ri }{4}H_0^{(1)}(k|\bx-\by|), & n=2,
\end{cases}
\qquad \bx,\by \in \R^n.
\end{align}
The densities of the potentials satisfy certain first-kind BIEs involving the single-layer and hypersingular BIOs (again, for definitions see \S\ref{sec:prelims})
\begin{align*}
\label{}
S_k&:\tH^{-1/2}(\Gamma)\to H^{1/2}(\Gamma)\cong(\tH^{-1/2}(\Gamma))^*,\\ T_k&:\tH^{1/2}(\Gamma)\to H^{-1/2}(\Gamma)\cong (\tH^{1/2}(\Gamma))^*,
\end{align*}
which for $\phi\in \scrD(\Gamma)$ and $\bx\in \Gamma$ have the following integral representations:
\begin{align}
\label{SDef}
S_k\phi (\bx) = \int_\Gamma \Phi(\bx,\by) \phi(\by)\, \rd s(\by), %
\qquad
T_k\phi (\bx) = \pdone{}{\bn(\bx)}\int_\Gamma \pdone{\Phi(\bx,\by)}{\bn(\by)} \phi(\by)\, \rd s(\by).%
\end{align}
These standard statements are summarised in the following two theorems. Here $[u]$ and $[\pdonetext{u}{\bn}]$ represent the jump across $\Gamma$ of $u$ and of its normal derivative respectively. 

\begin{thm}
\label{DirEquivThm}
For Lipschitz $\Gamma$,
problem $\sD$ has a unique solution $u$ satisfying %
\begin{align}
\label{eqn:SLPRep}
u(\bx )= -\cS_k\left[\pdonetext{u}{\bn}\right](\bx), \qquad\bx\in D,
\end{align}
where $[\pdonetext{u}{\bn}]\in \tH^{-1/2}(\Gamma)$ is the unique solution of the BIE
\begin{align}
\label{BIE_sl}
-S_k[\pdonetext{u}{\bn}]= g_{\sD}.
\end{align}
\end{thm}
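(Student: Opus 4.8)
The plan is to derive both the representation \eqref{eqn:SLPRep} and the unique solvability of the BIE \eqref{BIE_sl} from the (classical) unique solvability of problem $\sD$, together with Green's representation theorem and the mapping, trace and jump properties of the layer potentials recalled in \S\ref{sec:prelims}; in particular, no coercivity of $S_k$ is needed at this point. First, since $\Gamma$ is Lipschitz, problem $\sD$ has a unique solution $u$ for the given $g_\sD\in H^{1/2}(\Gamma)$; this is standard, see \cite{StWe84,Ste:87,WeSt:90,HoMaSt:96} and \cite{McLean,CoercScreen}. It then remains to identify $u$ with a single-layer potential and to analyse the resulting integral equation.

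For the representation \eqref{eqn:SLPRep}: the solution $u$ satisfies the Helmholtz equation and the Sommerfeld radiation condition in $D=\R^n\setminus\overline{\Gamma}$ and lies in $W^1_{\mathrm{loc}}(D)$, so Green's representation theorem applies and yields $u=\cD_k[u]-\cS_k[\pdonetext{u}{\bn}]$ in $D$, where $[u]$ and $[\pdonetext{u}{\bn}]$ are the jumps across $\Gamma$ of $u$ and of its normal derivative. These jumps belong to $\tH^{1/2}(\Gamma)$ and $\tH^{-1/2}(\Gamma)$ respectively: they lie in the relevant trace spaces on $\Gamma_\infty$ by the usual trace estimates, and are supported in $\overline{\Gamma}$ because $u$ solves the Helmholtz equation in a full neighbourhood of every point of $\Gamma_\infty\setminus\overline{\Gamma}$ (the relative boundary $\partial\Gamma$, of vanishing $(n-1)$-dimensional measure, contributes nothing to the surface integrals, while membership of $W^1_{\mathrm{loc}}(D)$ controls the behaviour of $u$ near $\partial\Gamma$). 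The Dirichlet condition \eqref{eqn:bc1}, understood for a screen as equality of both one-sided Dirichlet traces of $u$ on $\Gamma$ with $g_\sD$, forces $[u]=0$, so the double-layer term drops out and \eqref{eqn:SLPRep} follows with $[\pdonetext{u}{\bn}]\in\tH^{-1/2}(\Gamma)$.

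For the BIE \eqref{BIE_sl} and its unique solvability: taking the (two-sided, equal) Dirichlet trace of \eqref{eqn:SLPRep} onto $\Gamma$, and using that the single-layer potential is continuous across $\Gamma_\infty$ with trace $S_k$, gives $-S_k[\pdonetext{u}{\bn}]=u|_\Gamma=g_\sD$, so $[\pdonetext{u}{\bn}]$ solves \eqref{BIE_sl}. Injectivity of $S_k$ on $\tH^{-1/2}(\Gamma)$ then follows from uniqueness for problem $\sD$: if $\phi\in\tH^{-1/2}(\Gamma)$ with $S_k\phi=0$, then $v:=\cS_k\phi\in C^2(D)\cap W^1_{\mathrm{loc}}(D)$ solves the Helmholtz equation and the radiation condition in $D$ and has vanishing Dirichlet trace on $\Gamma$, hence $v$ solves problem $\sD$ with $g_\sD=0$, so $v\equiv 0$ in $D$, and the jump relation for the normal derivative of the single-layer potential then gives $\phi=-[\pdonetext{v}{\bn}]=0$. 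Since the previous paragraph shows that every $g_\sD\in H^{1/2}(\Gamma)$ equals $-S_k\phi$ for some $\phi\in\tH^{-1/2}(\Gamma)$, the operator $S_k$ is also surjective, and therefore \eqref{BIE_sl} has exactly one solution in $\tH^{-1/2}(\Gamma)$.

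The content here is standard; the one genuinely delicate ingredient is the rigorous justification, on a screen whose relative boundary need only be Lipschitz, of Green's representation theorem and of the accompanying trace and jump identities; in particular, one must check that $[\pdonetext{u}{\bn}]$ lies in $\tH^{-1/2}(\Gamma)$ rather than merely in $H^{-1/2}(\Gamma_\infty)$. I would handle this by appealing to the function-space framework of \S\ref{sec:prelims} together with \cite{McLean,Ha-Du:90,Ha-Du:92,CoercScreen}; granted those facts, the remainder of the argument is the bookkeeping above.
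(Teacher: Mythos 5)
The paper does not actually prove this theorem: it is presented explicitly as a summary of ``standard statements'', with existence, uniqueness and the BVP/BIE equivalence attributed to \cite{StWe84,Ste:87,WeSt:90,HoMaSt:96} and, for the Lipschitz case, to \cite{CoercScreen}. Your sketch is precisely the classical argument underlying those references---Green's representation in $D$, cancellation of the double-layer term because the two-sided Dirichlet condition forces $[u]=0$, the trace identity yielding \eqref{BIE_sl}, and injectivity of $S_k$ from uniqueness for problem $\sD$ together with the normal-derivative jump relation for $\cS_k$, with surjectivity coming for free from the representation---and it is correct in outline. One refinement is needed where you argue that $[\pdonetext{u}{\bn}]\in\tH^{-1/2}(\Gamma)$: the remark that $\partial\Gamma$ has vanishing $(n-1)$-dimensional measure is not the right mechanism in a negative-order Sobolev space, since nontrivial distributions can be supported on sets of measure zero. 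The correct chain is that the jump vanishes on the open set $\Gamma_\infty\setminus\overline\Gamma$, hence lies in $H^{-1/2}_{\overline\Gamma}$, and that $H^{-1/2}_{\overline\Gamma}=\tH^{-1/2}(\Gamma)$ precisely because $\Gamma$ is Lipschitz (indeed $C^0$ suffices, \cite[Thm 3.29]{McLean}); the paper's \S\ref{sec:SobolevSpaces} warns that this equality can fail for non-regular $\Gamma$, which is exactly why the Lipschitz hypothesis appears in the theorem and why the general case requires the extra care discussed in \cite{CoercScreen}. With that correction, and granting the quoted solvability results, your argument is complete and consistent with the route the paper's references take.
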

\begin{thm}
\label{NeuEquivThm}
For Lipschitz $\Gamma$, 
problem $\sN$ has a unique solution satisfying %
\begin{align}
\label{eqn:DLPRep}
 u(\bx ) =  \cD_k[u](\bx), \qquad\bx\in D,
\end{align}
where $[u]\in \tH^{1/2}(\Gamma)$ is the unique solution of the BIE
\begin{align}
\label{BIE_hyp}
T_k[u] = g_{\sN}.
\end{align}
\end{thm}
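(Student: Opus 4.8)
The plan is to take as given the classical well-posedness of problem $\sN$ for Lipschitz $\Gamma$ --- citing \cite{StWe84,Ste:87,WeSt:90,HoMaSt:96}, and \cite{CoercScreen} for the sharp regularity hypothesis --- and to let $u$ denote its unique solution. It then remains to show three things: that the jump $[u]$ lies in $\tH^{1/2}(\Gamma)$; that $u$ admits the double-layer representation \eqref{eqn:DLPRep}; and that $[u]$ is the \emph{unique} solution of the BIE \eqref{BIE_hyp} in $\tH^{1/2}(\Gamma)$.

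First I would identify the Cauchy data of $u$ on $\Gamma_\infty$. Writing $U_\pm:=\{\bx\in\R^n:\pm x_n>0\}$, note that $U_\pm\subset D$, so $u$ solves \eqref{eqn:HE2} and satisfies the Sommerfeld radiation condition in each half-space. Since $u\in W^1_{\mathrm{loc}}(D)$, the one-sided traces on $\Gamma_\infty$ of $u$ and of $\pdonetext{u}{\bn}$ from $U_+$ and from $U_-$ are well defined. On the relatively open set $\Gamma_\infty\setminus\overline\Gamma$, interior elliptic regularity makes $u$ smooth across $\Gamma_\infty$, so both $[u]$ and $[\pdonetext{u}{\bn}]$ vanish there; hence $[u]$ is supported in $\overline\Gamma$ and, by the characterisation of $\tH^{1/2}(\Gamma)$ in \S\ref{sec:prelims}, belongs to $\tH^{1/2}(\Gamma)$. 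On $\Gamma$ the boundary condition \eqref{eqn:bc2} forces both one-sided Neumann traces to equal $g_{\sN}$, so in fact $[\pdonetext{u}{\bn}]=0$ on all of $\Gamma_\infty$.

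Next I would derive \eqref{eqn:DLPRep} by applying the Helmholtz representation theorem (Green's third identity) in each of the Lipschitz domains $U_+$ and $U_-$ --- the radiation condition handling the behaviour at infinity --- and adding the two identities: the contributions over $\Gamma_\infty$ combine into the jumps of the Cauchy data, and since $[\pdonetext{u}{\bn}]=0$ the single-layer term $\cS_k[\pdonetext{u}{\bn}]$ drops out, leaving $u=\cD_k[u]$ in $D$. Using the facts recalled in \S\ref{sec:prelims} that $\cD_k$ maps $\tH^{1/2}(\Gamma)$ into $C^2(D)\cap W^1_{\mathrm{loc}}(D)$, that $[\cD_k\phi]=\phi$, and that the Neumann trace of $\cD_k\phi$ on $\Gamma$ equals $T_k\phi$ (the normal derivative of the double-layer potential being continuous across $\Gamma_\infty$), I would then take the Neumann trace of \eqref{eqn:DLPRep} and invoke \eqref{eqn:bc2} to obtain $T_k[u]=g_{\sN}$. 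Uniqueness for \eqref{BIE_hyp} follows by the standard argument: if $\phi\in\tH^{1/2}(\Gamma)$ with $T_k\phi=0$, then $v:=\cD_k\phi$ solves problem $\sN$ with $g_{\sN}=0$, hence $v\equiv0$ in $D$ by uniqueness for the BVP, and therefore $\phi=[v]=0$.

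The step I expect to be the main obstacle is the representation formula: $\overline\Gamma$ is an arbitrary compact subset of $\Gamma_\infty$, so Green's third identity cannot be applied directly on $D$. The half-space splitting reduces matters to the standard Lipschitz setting, but one must then check carefully that the boundary data entering the identities are genuine one-sided limits in the correct sense, that the radiation condition is inherited correctly by $u|_{U_\pm}$, and that all dualities match the conventions fixed for $\tH^{\pm1/2}(\Gamma)$ in \S\ref{sec:prelims}; alternatively one could cite a representation theorem formulated directly for screens, e.g.\ in \cite{McLean}. The remaining manipulations with $\cS_k$, $\cD_k$ and $T_k$ are routine given \S\ref{sec:prelims}.
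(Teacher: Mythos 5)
The paper offers no proof of this theorem: it is presented as a ``standard statement'' with the burden carried by the citations \cite{StWe84,Ste:87,WeSt:90,HoMaSt:96} and, for the sharp Lipschitz hypothesis, \cite{CoercScreen}. Your outline is essentially the argument those references give --- half-space splitting, Green's representation in $U^\pm$ with the radiation condition handling infinity, the jump relations for $\cD_k$, and uniqueness for the BIE deduced from uniqueness for the BVP --- so there is no divergence of method to report. One point you pass over too quickly: the boundary condition \eqref{eqn:bc2} forces the two one-sided Neumann traces to agree on $\Gamma$, and interior regularity forces them to agree on $\Gamma_\infty\setminus\overline\Gamma$, but this leaves open the possibility that $[\pdonetext{u}{\bn}]\in H^{-1/2}(\Gamma_\infty)$ is a nonzero distribution supported on $\partial\Gamma$. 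Ruling this out requires knowing that no nonzero element of $H^{-1/2}(\R^{n-1})$ is supported in $\partial\Gamma$ (equivalently, that $\partial\Gamma$ is $(-1/2)$-negligible), which holds for Lipschitz $\Gamma$ but is precisely the kind of issue that fails for rough screens and is why the paper restricts Theorems \ref{DirEquivThm}--\ref{NeuEquivThm} to the Lipschitz case; see \cite{CoercScreen,ChaHewMoi:13}. The same remark applies to your identification of $[u]$ as an element of $\tH^{1/2}(\Gamma)$ rather than merely of $H^{1/2}_{\overline\Gamma}$: the two spaces coincide for $C^0$ (hence Lipschitz) $\Gamma$ but not in general. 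With that caveat made explicit, your sketch is sound.
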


\subsection{Main results and outline of the paper}
\label{subsec:Outline}
In this paper we present new $k$-explicit continuity and coercivity estimates for the operators $S_k$ and $T_k$ appearing in \rf{BIE_sl} and \rf{BIE_hyp}. Our main results are contained in Theorems \ref{ThmSNormSmooth}-\ref{ThmTCoercive} below. 
We emphasize that Theorems \ref{ThmSNormSmooth}-\ref{ThmTCoercive} hold for any relatively open subset $\Gamma\subset\Gamma_\infty$, without any regularity assumption on $\Gamma$. (The Lipschitz regularity assumption in Theorems \ref{DirEquivThm}-\ref{NeuEquivThm} ensures equivalence between the relevant BVPs and the BIEs, but is not necessary for continuity and coercivity of the BIOs.) 
For the definitions of the $k$-dependent norms appearing in Theorems \ref{ThmSNormSmooth}-\ref{ThmTCoercive} see \S\ref{sec:SobolevSpaces}. 

\begin{thm}
\label{ThmSNormSmooth}
For any $s\in \R$, the single-layer operator $S_k$ defines a bounded linear operator $S_k:\tH^{s}(\Gamma)\to H^{s+1}(\Gamma)$, and there exists a constant $C>0$, independent of $k$ and $\Gamma$, such that, for all $\phi\in \tH^{s}(\Gamma)$, and with $L:=\diam{\Gamma}$,
\begin{align}
\label{SEst}
\norm{S_k \phi}{H^{s+1}_k(\Gamma)} \leq
\begin{cases}
C(1+ (kL)^{1/2})\norm{\phi}{\tilde{H}^{s}_k(\Gamma)}, & n=3,\\[2mm]
C \log{(2+(kL)^{-1})}(1+ (kL)^{1/2})\norm{\phi}{\tilde{H}^{s}_k(\Gamma)}, & n=2,
\end{cases}
\quad k>0.
\end{align}
\end{thm}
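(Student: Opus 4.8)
The plan is to reduce the screen estimate to a known mapping property of the free-space single-layer operator on the whole hyperplane $\Gamma_\infty$, using the Fourier-transform (spectral) representation that underlies the Ha-Duong approach. First I would recall that $S_k\phi$ is the trace on $\Gamma$ of the single-layer potential $\cS_k\phi$, and that for $\phi\in\tH^s(\Gamma)$ — viewed, after extension by zero, as an element of $H^s(\Gamma_\infty)$ supported in $\ol\Gamma$ — the full-hyperplane single-layer operator $S_k^\infty:H^s(\Gamma_\infty)\to H^{s+1}(\Gamma_\infty)$ acts as a Fourier multiplier. Identifying $\Gamma_\infty$ with $\R^{n-1}$ via the tangential variables $\bxi$, one has $\widehat{S_k^\infty\phi}(\bxi)=Z(\bxi)\widehat\phi(\bxi)$ with symbol $Z(\bxi)=\tfrac{1}{2}(|\bxi|^2-k^2)^{-1/2}$, where the branch is chosen so that $\sqrt{|\bxi|^2-k^2}$ has non-negative real part (and a suitable sign of the imaginary part for $|\bxi|<k$, dictated by the radiation condition). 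Then $\norm{S_k\phi}{H^{s+1}(\Gamma)}\le\norm{S_k^\infty\phi}{H^{s+1}(\Gamma_\infty)}$ by the definition of the quotient-space norm on $H^{s+1}(\Gamma)$, so everything comes down to bounding the multiplier.

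The core of the argument is therefore a pointwise estimate on the symbol: I would show that, with the $k$-weighted norms of \S\ref{sec:SobolevSpaces} (whose weight is built from $(k^2+|\bxi|^2)^{1/2}$ or similar), the quantity
\begin{equation*}
\sup_{\bxi\in\R^{n-1}}\frac{(k^2+|\bxi|^2)^{(s+1)/2}}{(k^2+|\bxi|^2)^{s/2}}\,\bigl|Z(\bxi)\bigr|
=\sup_{\bxi}\,(k^2+|\bxi|^2)^{1/2}\,|Z(\bxi)|
\end{equation*}
is controlled by the right-hand side of \eqref{SEst}. Writing $r=|\bxi|/k$, this reduces to estimating $f(r):=(1+r^2)^{1/2}/|r^2-1|^{1/2}$ uniformly in $r\ge 0$. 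This function is bounded away from the resonance $r=1$, blows up like $|r-1|^{-1/2}$ near it, and behaves like a constant as $r\to\infty$ and as $r\to0$. The key point is that the $L^\infty$ bound on the multiplier is \emph{not} finite near $r=1$ if one insists on an honest $H^{s}\to H^{s+1}$ bound with the singular symbol left as is; the resolution, exactly as in Ha-Duong, is that one does not bound $S_k^\infty$ from $H^s$ to $H^{s+1}$ directly by the sup of the symbol, but rather exploits that the imaginary part of $\sqrt{|\bxi|^2-k^2}$ is bounded below on the `propagating' region $|\bxi|<k$, so $|Z(\bxi)|$ is actually bounded there by a constant times $\min\{1,(k^2-|\bxi|^2)^{-1/2}\}$ — and integrating the residual, or splitting into $|\bxi|<k$ and $|\bxi|>k$, produces the factors $(kL)^{1/2}$ (from the scale $L=\diam\Gamma$ entering through the support constraint and Cauchy--Schwarz on the propagating band of width $\sim k$ in $n-1$ dimensions) and, in two dimensions, the logarithm $\log(2+(kL)^{-1})$ (from the low-frequency $|\bxi|\lesssim 1/L$ contribution, where the $n=2$ symbol $Z\sim\log$ rather than the algebraic decay available when $n=3$).

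Concretely the steps are: (i) set up the Fourier representation of $S_k^\infty$ and record the symbol $Z$, citing Ha-Duong for the identification and the branch choice; (ii) bound $\norm{S_k\phi}{H^{s+1}_k(\Gamma)}$ above by $\norm{S_k^\infty\tilde\phi}{H^{s+1}_k(\Gamma_\infty)}$ where $\tilde\phi$ is the zero-extension, using the definition of the screen norms in \S\ref{sec:SobolevSpaces}; (iii) split the Fourier integral into the evanescent part $|\bxi|>k$ and the propagating part $|\bxi|<k$; (iv) on the evanescent part, bound the symbol pointwise by $C(k^2+|\bxi|^2)^{-1/2}$ — giving, after the weighted norms are inserted, a clean bound by $C\norm{\phi}{\tH^s_k(\Gamma)}$ in both dimensions, with an extra $\log$ in $n=2$ coming from the region $|\bxi|\lesssim 1/L$; (v) on the propagating part, use the support of $\tilde\phi$ in a set of diameter $L$ — so that $\widehat{\tilde\phi}$ is (analytic and) slowly varying on scale $1/L$ — together with Cauchy--Schwarz over the ball $\{|\bxi|<k\}$ and the improved bound on $\im\sqrt{|\bxi|^2-k^2}$, to absorb the symbol singularity and extract the factor $(1+(kL)^{1/2})$; (vi) combine. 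The main obstacle is step (v): the naive $L^\infty$ bound on the symbol fails because of the resonance at $|\bxi|=k$, so one genuinely needs Ha-Duong's observation that the radiation-condition branch keeps $|Z(\bxi)|$ integrable over the propagating ball and that, thanks to $\supp\tilde\phi\subset\ol\Gamma$ with $\diam\ol\Gamma=L$, the $L^2$-norm of $\widehat{\tilde\phi}$ over that ball is controlled — the interplay of the support size $L$ and the wavenumber $k$ is precisely what produces the $(kL)^{1/2}$ factor, and getting its dependence sharp (and the $\log$ factor in $n=2$ exactly right) is the delicate part.
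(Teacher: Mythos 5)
Your starting point (the Fourier representation of the single-layer operator and the bound $\norm{S_k\phi}{H^{s+1}_k(\Gamma)}\le\normt{S_k^\infty\phi}{H^{s+1}_k(\R^{n-1})}$) matches the paper's setup, and you correctly identify the central obstacle: the symbol of $S_k^\infty$ is $\tfrac{\ri}{2}Z(\bxi)^{-1}$ with $Z(\bxi)=\sqrt{k^2-|\bxi|^2}$ for $|\bxi|<k$, so a naive $L^\infty$ multiplier bound fails at $|\bxi|=k$. But your proposed resolution does not work as stated. For \emph{real} $k$ the quantity $\sqrt{|\bxi|^2-k^2}$ is purely imaginary on the propagating region and its modulus tends to zero as $|\bxi|\to k$; there is no lower bound on its imaginary part, and $|Z(\bxi)^{-1}|=(k^2-|\bxi|^2)^{-1/2}$ is genuinely unbounded there. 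A lower bound of that type is exactly the device available only for complex wavenumber (which is why, as the paper notes, Ha-Duong's coercivity argument covers only $\mathrm{Im}\,k>0$). Your fallback — integrability of the singularity plus a pointwise/Lipschitz estimate on $\widehat{\tilde\phi}$ coming from $\supp\tilde\phi\subset\ol\Gamma$ — is the right kind of idea (it is what the paper uses for the coercivity of $T_k$), but you have not carried it out, and for general $s\in\R$ (in particular $s<0$) relating $\int_{|\bxi|\approx k}|\widehat{\tilde\phi}|^2$ back to $\norm{\phi}{\tH^s_k(\Gamma)}^2$ is not a one-line Cauchy--Schwarz step. In addition, your explanation of the $n=2$ logarithm is wrong: the symbol $Z(\bxi)^{-1}$ contains no logarithm at low frequencies (it equals $1/k$ at $\bxi=0$), so the factor $\log(2+(kL)^{-1})$ cannot arise from "the low-frequency contribution of the symbol."

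The paper avoids all of this by truncating the \emph{kernel} rather than working with the singular symbol: since $\phi$ is supported in a set of diameter $L$, one has $S_k\phi=(S_k^L\phi)|_\Gamma$ where $S_k^L$ is convolution with $\Phi_L:=\Phi_c\,\mathbf{1}_{\{|\tbx|\le L\}}$. The Fourier transform of the truncated kernel is smooth across $|\bxi|=k$, and Lemma \ref{LemPhiFT} proves, by direct oscillatory-integral estimates (integration by parts with Bessel identities), the uniform bound $|\widehat{\Phi_L}(\bxi,0)|\sqrt{k^2+|\bxi|^2}\le C(1+(kL)^{1/2})$ for $n=3$, with the extra factor $\log(2+(kL)^{-1})$ for $n=2$ coming from $\int_0^{kL}|H_0^{(1)}(t)|\,\rd t$ and the logarithmic singularity of $H_0^{(1)}$ at the origin — i.e.\ from the finite size of the screen interacting with the 2D fundamental solution, not from the symbol. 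This yields a clean $L^\infty$ multiplier bound and hence the theorem for every $s$ simultaneously. To repair your argument you would either need to adopt this kernel-truncation step or fully develop the pointwise estimate on $\widehat{\tilde\phi}$ near the sphere $|\bxi|=k$ and track its dependence on $s$; as written, step (v) is a gap, not a proof.
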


\begin{thm}
\label{ThmCoerc}
The sesquilinear form on $\tilde{H}^{-1/2}(\Gamma)\times \tilde{H}^{-1/2}(\Gamma)$ defined by
\begin{align*}
\label{}
a_\sD(\phi,\psi):=\langle S_k\phi,\psi \rangle_{H^{1/2}(\Gamma)\times \tilde{H}^{-1/2}(\Gamma)}, \quad \phi,\psi\in \tilde{H}^{-1/2}(\Gamma),
\end{align*}
satisfies the coercivity estimate
\begin{align}
\label{SCoercive}
|a_\sD(\phi,\phi)|
\geq \frac{1}{2\sqrt{2}} \norm{\phi}{\tilde{H}^{-1/2}_k(\Gamma)}^2,\quad \phi\in \tilde{H}^{-1/2}(\Gamma), \,\, k>0.
\end{align}
\end{thm}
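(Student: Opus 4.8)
The plan is to reduce the coercivity estimate to a spectral/Fourier-analytic statement about the single-layer potential on the whole plane $\Gamma_\infty$. The starting point is the well-known identity, valid for $\phi\in\tH^{-1/2}(\Gamma)$ (so that $\cS_k\phi$ is a finite-energy Helmholtz solution in $D$ with jump $[\partial(\cS_k\phi)/\partial\bn]=-\phi$), expressing the bilinear form in terms of a volume integral over $\R^n$:
\begin{align*}
a_\sD(\phi,\phi)=\langle S_k\phi,\phi\rangle_{H^{1/2}(\Gamma)\times\tH^{-1/2}(\Gamma)}
=\int_{\R^n}\left(|\bnabla \cS_k\phi|^2-k^2|\cS_k\phi|^2\right)\rd\bx,
\end{align*}
obtained by Green's identity together with the radiation condition (the boundary term at infinity contributing a purely imaginary quantity). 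Since $\phi$ is supported on the planar screen, $\cS_k\phi$ can be written via the half-space Fourier representation: writing $\bx=(\bx',x_n)$ and taking the Fourier transform in $\bx'$, one has $\widehat{\cS_k\phi}(\bxi,x_n)=\hat\phi(\bxi)\,\re^{\ri\sqrt{k^2-|\bxi|^2}\,|x_n|}/(2\ri\sqrt{k^2-|\bxi|^2})$, with the branch of the square root chosen so that $\sqrt{k^2-|\bxi|^2}$ has nonnegative real and imaginary parts. This is precisely the spectral representation attributed to Ha-Duong that the paper's Section~\ref{sec:prelims} will set up.

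Next I would carry out the Plancherel computation. Substituting the Fourier representation into the volume integral and integrating out $x_n\in\R$ yields an expression of the form
\begin{align*}
a_\sD(\phi,\phi)=\frac{1}{2}\int_{\R^{n-1}}\frac{|\hat\phi(\bxi)|^2}{\sqrt{k^2-|\bxi|^2}}\,\rd\bxi,
\end{align*}
up to normalisation constants depending on the Fourier transform convention; the key structural point is that the kernel multiplier is $1/(2\sqrt{k^2-|\bxi|^2})$. Splitting the integral into the propagating region $|\bxi|<k$, where $\sqrt{k^2-|\bxi|^2}>0$ is real and the integrand is real and positive, and the evanescent region $|\bxi|>k$, where $\sqrt{k^2-|\bxi|^2}=\ri\sqrt{|\bxi|^2-k^2}$ is purely imaginary, we see that $\real{a_\sD(\phi,\phi)}\geq 0$ and $\im{a_\sD(\phi,\phi)}\geq 0$ (or one extracts $\real{\re^{-\ri\pi/4}a_\sD(\phi,\phi)}\geq 0$ directly). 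One then estimates
\begin{align*}
|a_\sD(\phi,\phi)|\geq\frac{1}{\sqrt{2}}\max\left\{\real{a_\sD(\phi,\phi)},\im{a_\sD(\phi,\phi)}\right\}
\geq\frac{1}{2\sqrt{2}}\left(\real{a_\sD(\phi,\phi)}+\im{a_\sD(\phi,\phi)}\right),
\end{align*}
and the last quantity is, by the split above, a positive multiple of $\int_{\R^{n-1}}|\hat\phi(\bxi)|^2/\sqrt{|k^2-|\bxi|^2|}\,\rd\bxi$. It remains only to recognise that, with the $k$-dependent norm as defined in Section~\ref{sec:SobolevSpaces}, one has $\norm{\phi}{\tH^{-1/2}_k(\Gamma)}^2$ comparable to $\int_{\R^{n-1}}(k^2+|\bxi|^2)^{-1/2}|\hat\phi(\bxi)|^2\rd\bxi$, and that $(k^2+|\bxi|^2)^{1/2}\geq|k^2-|\bxi|^2|^{1/2}$ pointwise, which gives the lower bound with constant $1/(2\sqrt2)$ exactly. (The $k$-weighted norm is presumably defined precisely so that this inequality is sharp with no extra constant.)

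The main obstacle, and the place where care is needed, is the first step: justifying the volume-integral identity for $a_\sD$ and the Fourier representation of $\cS_k\phi$ for general $\phi\in\tH^{-1/2}(\Gamma)$ with $\Gamma$ merely relatively open (no regularity). For smooth densities and smooth $\Gamma$ this is classical, but here one must argue by density of $\scrD(\Gamma)$ (or of smooth compactly supported functions in $\R^{n-1}$ restricted appropriately) in $\tH^{-1/2}(\Gamma)$, using the mapping properties of $\cS_k$ stated in Section~\ref{sec:prelims} to pass to the limit, and one must confirm that the evanescent-mode contribution is genuinely absolutely convergent so that the Plancherel manipulation and the interchange of integration in $x_n$ and $\bxi$ are legitimate. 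Once the identity $a_\sD(\phi,\phi)=c_n\int(k^2-|\bxi|^2)^{-1/2}|\hat\phi|^2$ is in hand, the coercivity estimate is immediate from the elementary inequalities above.
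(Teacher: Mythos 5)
Your core argument---pass to the spectral formula $a_\sD(\phi,\phi)=\frac{\ri}{2}\int_{\R^{n-1}}|\widehat\phi(\bxi)|^2/Z(\bxi)\,\rd\bxi$, observe that the propagating region $|\bxi|<k$ and the evanescent region $|\bxi|>k$ contribute with phases a quarter-turn apart and with definite sign, rotate by $\pi/4$, and then use $\sqrt{|k^2-|\bxi|^2|}\leq\sqrt{k^2+|\bxi|^2}$---is exactly the paper's proof, which applies \rf{SIPFourierRep} from Theorem \ref{ThmFourierRep} to $\phi\in\scrD(\Gamma)$ and concludes by density of $\scrD(\Gamma)$ in $\tH^{-1/2}(\Gamma)$. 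However, two steps of your route need repair. First, the proposed starting identity $a_\sD(\phi,\phi)=\int_{\R^n}\big(|\bnabla\cS_k\phi|^2-k^2|\cS_k\phi|^2\big)\rd\bx$ is not available: a radiating solution satisfies $|\cS_k\phi(\bx)|\sim|\bx|^{-(n-1)/2}$, so neither term is integrable over $\R^n$, and correspondingly the $x_n$-integration of $|\re^{\ri Z(\bxi)|x_n|}|^2$ in your Plancherel computation diverges on $|\bxi|<k$. This detour is also unnecessary: the paper obtains \rf{SIPFourierRep} for $\phi\in\scrD(\Gamma)$ directly, by Fourier-transforming the convolution kernel $\Phi_c$ in Theorem \ref{ThmFourierRep}, with no Green's identity and no regularity of $\Gamma$ required; you should start from there. (Getting the factor of $\ri$ right in that formula also matters for your claim that both $\real{a_\sD(\phi,\phi)}$ and $\im{a_\sD(\phi,\phi)}$ are nonnegative; with the correct multiplier $\ri/(2Z(\bxi))$ the propagating modes give the positive imaginary part and the evanescent modes the positive real part.)

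Second, your chain $|a|\geq\frac{1}{\sqrt2}\max\{\real{a},\im{a}\}\geq\frac{1}{2\sqrt2}\left(\real{a}+\im{a}\right)$, combined with $\real{a}+\im{a}=\frac12\int_{\R^{n-1}}|\widehat\phi(\bxi)|^2/\sqrt{|k^2-|\bxi|^2|}\,\rd\bxi$, delivers only the constant $\frac{1}{4\sqrt2}$, not the stated $\frac{1}{2\sqrt2}$. The sharp elementary inequality is $|z|\geq(\real{z}+\im{z})/\sqrt{2}$, i.e.\ $|z|\geq\real{\re^{-\ri\pi/4}z}$, which is precisely how the paper proceeds, via $\real{\re^{\ri\pi/4}/Z(\bxi)}=1/\big(\sqrt2\,\sqrt{|k^2-|\bxi|^2|}\,\big)$. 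With this, and noting that $\norm{\phi}{\tH^{-1/2}_k(\Gamma)}^2$ \emph{equals} (not merely is comparable to) $\int_{\R^{n-1}}(k^2+|\bxi|^2)^{-1/2}|\widehat\phi(\bxi)|^2\rd\bxi$ by definition, the constant $\frac{1}{2\sqrt2}$ comes out exactly.
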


\begin{thm}
\label{ThmTNormSmooth}
For any $s\in\R$, the hypersingular operator $T_k$ defines a bounded linear operator $T_k:\tH^{s}(\Gamma)\to H^{s-1}(\Gamma)$, and
\begin{align}
\label{TEst}
\norm{T_k \phi}{H^{s-1}_k(\Gamma)} \leq \frac{1}{2}\norm{\phi}{\tilde{H}^{s}_k(\Gamma)}, \quad \phi\in \tH^{s}(\Gamma), \,\,k>0.
\end{align}
\end{thm}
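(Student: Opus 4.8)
The plan is to reduce \eqref{TEst} to the corresponding multiplier estimate on the whole hyperplane $\Gamma_\infty\cong\R^{n-1}$, where $T_k$ acts as a Fourier multiplier. The first step is to note that, for $\phi\in\scrD(\Gamma)$, the hypersingular kernel $\partial^2\Phi/(\partial n(\bx)\,\partial n(\by))$ depends only on $\bx-\by$, so that $T_k\phi=r_\Gamma\,T_k^\infty\,e_\Gamma\phi$, where $e_\Gamma:\tH^s(\Gamma)\to H^s(\Gamma_\infty)$ is extension by zero, $r_\Gamma:H^{s-1}(\Gamma_\infty)\to H^{s-1}(\Gamma)$ is restriction, and $T_k^\infty$ is the hypersingular operator for the full hyperplane; this identity then extends to all of $\tH^s(\Gamma)$ by density of $\scrD(\Gamma)$ and continuity. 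By the definition of the $k$-weighted spaces in \S\ref{sec:SobolevSpaces}, $e_\Gamma$ is an isometry of $\tH^s_k(\Gamma)$ into $H^s_k(\Gamma_\infty)$, while $r_\Gamma$ has norm at most $1$ from $H^{s-1}_k(\Gamma_\infty)$ onto $H^{s-1}_k(\Gamma)$. Hence it suffices to prove $\normt{T_k^\infty u}{H^{s-1}_k(\R^{n-1})}\le\tfrac12\normt{u}{H^s_k(\R^{n-1})}$ for all $u$, with no loss in the constant and no regularity assumption on $\Gamma$.

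The second step is to identify the multiplier of $T_k^\infty$. Writing $\bx=(\bx',x_n)$ with $\bx'\in\R^{n-1}$ and taking the Fourier transform $\hat\cdot$ in the tangential variable $\bx'$, the fundamental solution satisfies
\[
\hat\Phi(\bxi,x_n)=\frac{\re^{-\zeta(\bxi)|x_n|}}{2\zeta(\bxi)},\qquad \zeta(\bxi):=(|\bxi|^2-k^2)^{1/2},\quad\bxi\in\R^{n-1},
\]
with the branch fixed by $\real{\zeta(\bxi)}\ge0$ (so $\zeta(\bxi)=-\ri(k^2-|\bxi|^2)^{1/2}$ for $|\bxi|<k$), this being the choice consistent with the Sommerfeld radiation condition. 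Consequently the double-layer potential $\cD_k u$, whose kernel is $\partial\Phi/\partial n(\by)$, has tangential Fourier transform $\tfrac12\sign(x_n)\re^{-\zeta(\bxi)|x_n|}\hat u(\bxi)$ at height $x_n$; its normal derivative $\partial/\partial x_n$ is continuous across $\{x_n=0\}$ and equals $-\tfrac12\zeta(\bxi)\hat u(\bxi)$ there, so that
\[
\widehat{T_k^\infty u}(\bxi)=-\tfrac12\,\zeta(\bxi)\,\hat u(\bxi),\qquad\bxi\in\R^{n-1}.
\]
This is the spectral representation of the hypersingular operator underpinning the paper; its rigorous justification for $u$ in the relevant Sobolev spaces (via density of $\scrD$ and continuity) parallels that already carried out for $S_k$.

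The final step is immediate from the elementary pointwise bound on the symbol, $\bigl|\tfrac12\zeta(\bxi)\bigr|=\tfrac12\bigl||\bxi|^2-k^2\bigr|^{1/2}\le\tfrac12\bigl(|\bxi|^2+k^2\bigr)^{1/2}$, valid for all $\bxi\in\R^{n-1}$ and $k>0$: by Plancherel's theorem and the weight $(k^2+|\bxi|^2)^t$ in the Fourier characterisation of $\normt{\cdot}{H^t_k(\R^{n-1})}$ from \S\ref{sec:SobolevSpaces},
\[
\normt{T_k^\infty u}{H^{s-1}_k(\R^{n-1})}^2=\tfrac14\int_{\R^{n-1}}(k^2+|\bxi|^2)^{s-1}\,\bigl||\bxi|^2-k^2\bigr|\,|\hat u(\bxi)|^2\,\rd\bxi\le\tfrac14\,\normt{u}{H^s_k(\R^{n-1})}^2,
\]
which is the required estimate on $\R^{n-1}$, and in particular proves boundedness of $T_k^\infty:H^s(\R^{n-1})\to H^{s-1}(\R^{n-1})$. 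Combined with the reduction of the first paragraph, this yields \eqref{TEst} for every $s\in\R$, every relatively open $\Gamma\subset\Gamma_\infty$, and every $k>0$, with the universal constant $\tfrac12$. I do not anticipate a substantive obstacle: the estimate is driven entirely by the sharp inequality $||\bxi|^2-k^2|\le|\bxi|^2+k^2$, and the only points needing care are the bookkeeping of the isometric extension and contractive restriction in the $k$-weighted norms and the (already available) justification of the spectral representation for general $u$.
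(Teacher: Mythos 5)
Your proposal is correct and follows essentially the same route as the paper: reduce to the Fourier multiplier representation $\widehat{T_k^\infty\varphi}(\bxi)=\tfrac{\ri}{2}Z(\bxi)\widehat{\varphi}(\bxi)$ on $\R^{n-1}$ (your $-\tfrac12\zeta(\bxi)$ equals the paper's $\tfrac{\ri}{2}Z(\bxi)$), use that restriction to $\Gamma$ does not increase the $H^{s-1}_k$ norm and that the $\tH^s_k(\Gamma)$ norm is by definition the $H^s_k(\R^{n-1})$ norm, and conclude from the pointwise symbol bound $\bigl||\bxi|^2-k^2\bigr|\leq k^2+|\bxi|^2$ together with density of $\scrD(\Gamma)$. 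The paper simply invokes its Theorem~\ref{ThmFourierRep} for the multiplier rather than re-deriving it via the double-layer potential, but the argument is the same.
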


\begin{thm}
\label{ThmTCoercive}
The sesquilinear form on $\tilde{H}^{1/2}(\Gamma)\times \tilde{H}^{1/2}(\Gamma)$ defined by
\begin{align*}
\label{}
a_\sN(\phi,\psi):=\langle T_k\phi,\psi \rangle_{H^{-1/2}(\Gamma)\times \tilde{H}^{1/2}(\Gamma)}, \quad \phi,\psi\in \tilde{H}^{1/2}(\Gamma),
\end{align*}
satisfies, for any $c_0>0$, the coercivity estimate %
\begin{align}
\label{bEst}
|a_\sN(\phi,\phi)|\geq
C(kL)^{\beta}
\norm{\phi}{\tilde{H}^{1/2}_k(\Gamma)}^2, \quad \phi\in \tilde{H}^{1/2}(\Gamma),\,\,kL\geq c_0,
\end{align}
where $L:=\diam{\Gamma}$,
$C>0$ is a constant depending only on $c_0$, %
and
\begin{align}
\label{eq:beta}
\beta=
\begin{cases}
-\frac{2}{3},&n=3,\\[1mm]
-\frac{1}{2}, & n=2.
\end{cases}
\end{align}
\end{thm}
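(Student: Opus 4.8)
The plan is to pass from the screen $\Gamma$ to the whole plane $\Gamma_\infty\cong\R^{n-1}$ via the spectral (Fourier-multiplier) representation of $T_k$ recorded in the earlier sections, and then to estimate the resulting multiplier by hand, in the spirit of Ha-Duong. Writing $\widehat\phi$ for the (suitably normalised) $(n-1)$-dimensional Fourier transform of the extension of $\phi\in\tilde H^{1/2}(\Gamma)$ by zero to $\Gamma_\infty$, that representation gives
\begin{align*}
a_\sN(\phi,\phi)=c_n\int_{\R^{n-1}}\sqrt{|\bxi|^{2}-k^{2}}\;|\widehat\phi(\bxi)|^{2}\,\rd\bxi,
\end{align*}
for a fixed constant $c_n>0$, with the square root taken so that $\sqrt{|\bxi|^{2}-k^{2}}\ge 0$ for $|\bxi|>k$ and $\sqrt{|\bxi|^{2}-k^{2}}=\ri\,(k^{2}-|\bxi|^{2})^{1/2}$ for $|\bxi|<k$. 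Taking real and imaginary parts and using $|z|\ge 2^{-1/2}(|\mathrm{Re}\,z|+|\mathrm{Im}\,z|)$ — the inequality responsible for coercivity — yields the basic lower bound
\begin{align*}
|a_\sN(\phi,\phi)|\ge \frac{c_n}{\sqrt2}\int_{\R^{n-1}}\bigl|\,|\bxi|^{2}-k^{2}\,\bigr|^{1/2}\,|\widehat\phi(\bxi)|^{2}\,\rd\bxi.
\end{align*}

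Next I would compare the weight $\bigl|\,|\bxi|^{2}-k^{2}\,\bigr|^{1/2}$ against the weight $(k^{2}+|\bxi|^{2})^{1/2}$ defining $\norm{\cdot}{\tilde H^{1/2}_k(\Gamma)}$. With $L=\diam\Gamma$ and a parameter $\delta\in(0,1)$ to be chosen, let $A_\delta:=\{\bxi\in\R^{n-1}:\bigl|\,|\bxi|-k\,\bigr|<\delta k\}$ be a thin annular neighbourhood of the critical sphere $\{|\bxi|=k\}$. Outside $A_\delta$ the two weights are comparable up to a factor $\ord{\delta^{-1/2}}$ (one checks $\sup_{\bxi\notin A_\delta}(k^{2}+|\bxi|^{2})^{1/2}\big/\bigl|\,|\bxi|^{2}-k^{2}\,\bigr|^{1/2}=\ord{\delta^{-1/2}}$), so
\begin{align*}
\int_{\R^{n-1}\setminus A_\delta}(k^{2}+|\bxi|^{2})^{1/2}|\widehat\phi|^{2}\,\rd\bxi\le C\delta^{-1/2}\,|a_\sN(\phi,\phi)|.
\end{align*}
Since $(k^{2}+|\bxi|^{2})^{1/2}\sim k$ on $A_\delta$, everything then reduces to bounding $k\int_{A_\delta}|\widehat\phi|^{2}\,\rd\bxi$ and absorbing it into $\norm{\phi}{\tilde H^{1/2}_k(\Gamma)}^{2}$.

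The heart of the matter is to estimate $\int_{A_\delta}|\widehat\phi|^{2}\,\rd\bxi$ using the only available structural information, $\supp\phi\subseteq\overline\Gamma$: translating so that $0\in\Gamma$ gives $\supp\phi\subseteq\{|\bx|\le L\}$, so $\widehat\phi$ is entire of exponential type $L$ and varies slowly at scale $1/L$ — heuristically, for $\delta k\lesssim 1/L$ the set $A_\delta$ is too thin for $\widehat\phi$ to concentrate there. For $n=2$ the critical set $\{|\xi|=k\}\subseteq\R$ is a pair of points, $A_\delta$ is two intervals of length $\ord{\delta k}$, and the elementary bound $\|\widehat\phi\|_{L^\infty}\le |\{|\bx|\le L\}|^{1/2}\norm{\phi}{L^2}$ gives $\int_{A_\delta}|\widehat\phi|^{2}\lesssim\delta kL\,\norm{\phi}{L^2}^{2}\lesssim\delta kL\,k^{-1}\norm{\phi}{\tilde H^{1/2}_k(\Gamma)}^{2}$; choosing $\delta\sim(kL)^{-1}$ to make this $\le\tfrac12\norm{\phi}{\tilde H^{1/2}_k(\Gamma)}^{2}$ and absorbing leaves $|a_\sN(\phi,\phi)|\gtrsim\delta^{1/2}\norm{\phi}{\tilde H^{1/2}_k(\Gamma)}^{2}\sim(kL)^{-1/2}\norm{\phi}{\tilde H^{1/2}_k(\Gamma)}^{2}$, i.e.\ \eqref{bEst} with $\beta=-1/2$ (and this is sharp). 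For $n=3$ the circle $\{|\bxi|=k\}\subseteq\R^{2}$ is curved and the crude $L^\infty$ bound is wasteful; instead one combines a Fourier-restriction / Agmon--H\"ormander-type estimate for $\int_{\{|\bxi|=k\}}|\widehat\phi|^{2}\,\rd\sigma$ — exploiting the $|\cdot|^{-1/2}$ decay of $2\pi J_0$, the Fourier transform of arc length on the unit circle, against $\supp\phi\subseteq\{|\bx|\le L\}$ via Young's inequality — with a one-dimensional Sobolev estimate in the radial direction (using $\|\,|\bx|\phi\|_{L^2}\le L\norm{\phi}{L^2}$) to pass from the circle to the full annulus; the resulting, more delicate, balance of $\delta$ against $kL$ produces the stated exponent $\beta=-\tfrac23$. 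Throughout, $kL\ge c_0$ guarantees that the chosen $\delta=\delta(kL)$ stays below a fixed constant $<1$, so that $A_\delta$ is indeed a thin neighbourhood and the constant $C$ in \eqref{bEst} depends only on $c_0$.

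The step I expect to be the main obstacle is exactly this annular estimate for $n=3$: quantifying sharply that $\widehat\phi$ cannot concentrate near the critical sphere because $\phi$ is supported in a ball of radius $L$, and extracting the correct power of $kL$, is where the curvature of the Fourier sphere enters and where the non-trivial exponent $-\tfrac23$ — as opposed to the sharp $-\tfrac12$ of the two-dimensional case — originates; this is the ingredient that genuinely relies on the techniques of Ha-Duong.
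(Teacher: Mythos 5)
Your overall strategy is exactly that of the paper: the spectral formula \eqref{TIPFourierRep}, the lower bound $|a_\sN(\phi,\phi)|\ge \tfrac{1}{2\sqrt2}\int_{\R^{n-1}}\bigl|\,|\bxi|^2-k^2\bigr|^{1/2}|\widehat\phi|^2\,\rd\bxi$, a thin annulus about $\{|\bxi|=k\}$ on whose complement the weights $\bigl|\,|\bxi|^2-k^2\bigr|^{1/2}$ and $(k^2+|\bxi|^2)^{1/2}$ are compared, and a non-concentration estimate for $\widehat\phi$ on the annulus that is then absorbed into $\norm{\phi}{\tilde H^{1/2}_k(\Gamma)}^2$. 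For $n=2$ your argument is complete and correct, and in fact slightly simpler than the paper's at the annular step: you use $\|\widehat\phi\|_\infty^2\le 2L\|\phi\|_{L^2}^2$ together with $|A_\delta|\sim\delta k$, whereas the paper uses the Lipschitz bound $|\widehat\phi(\bxi_1)-\widehat\phi(\bxi_2)|\le c|\bxi_1-\bxi_2|\,\|\phi\|_{L^2}$ to shift the annulus radially into the region where the weight is bounded below. Both give $\beta=-1/2$.

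The genuine gap is the case $n=3$, which you yourself flag as the main obstacle: the annular estimate is never actually carried out, and the ingredients you name do not evidently produce $-2/3$. If you implement the restriction estimate for the circle $\{|\bxi|=k\}\subset\R^2$ by Young's inequality against the $|\cdot|^{-1/2}$ decay of $J_0$, as you describe, you get $\int_{|\bxi|=k}|\widehat\phi|^2\,\rd\sigma\lesssim (kL)^{1/2}L\|\phi\|_{L^2}^2$; multiplying by the annulus width and optimising $\delta$ then yields only $\beta=-3/4$. If instead you invoke the sharp Agmon--H\"ormander trace lemma $\int_{|\bxi|=r}|\widehat\phi|^2\,\rd\sigma\le CL\|\phi\|_{L^2}^2$ (uniformly in $r$), the same bookkeeping gives $\beta=-1/2$ --- which is precisely the paper's \emph{conjecture} for $n=3$, not its theorem, so you would need to justify that lemma with full rigour and uniform constants rather than gesture at it. Neither route lands on $-2/3$, so the claim that ``the resulting balance produces the stated exponent'' is unsubstantiated. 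The paper's actual mechanism is dimension-independent and more elementary: from $\supp\phi\subset\overline\Gamma$ one gets $|\widehat\phi(\bxi)|^2\le 2|\widehat\phi(\bxi\pm\eps\hat\bxi)|^2+c\eps^2\|\phi\|_{L^2}^2$; the shifted term lives where $|Z(\bxi)|\ge(\eps k)^{1/2}$ and is controlled by $\sqrt{k/\eps}\,|a_\sN(\phi,\phi)|$, while the error term, integrated over the annulus of measure $\sim\eps k^{n-2}$ and multiplied by the weight $\sim k$, contributes $c\eps^3k^{n-2}\norm{\phi}{\tilde H^{1/2}_k(\Gamma)}^2$. Absorption forces $\eps\sim k^{-(n-2)/3}$, whence $\beta=-\tfrac12-\tfrac{n-2}{6}$, i.e.\ $-\tfrac12$ for $n=2$ and $-\tfrac23$ for $n=3$. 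You need either to adopt this shift argument for $n=3$ or to supply a complete, correctly quantified restriction-type estimate; as written, half the theorem is unproved.
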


Our proofs of these theorems are given in \S\ref{sec:SkAnalysis} and \S\ref{sec:TkAnalysis} below. In Remarks \ref{rem:sharpness1}, \ref{SRemcoer}, \ref{TRem}, and \ref{rem:sharpness7}, we show that the estimates in Theorems \ref{ThmSNormSmooth}-\ref{ThmTCoercive} are sharp in their dependence on $k$ in the high frequency limit $k\to\infty$, with one exception: we suspect that Theorem \ref{ThmTCoercive} may be true in the case $n=3$ with the same value $\beta=-1/2$ as in the 2D case, this conjecture consistent with Remark \ref{TRem} below.

The implications of these results for the analysis of high frequency acoustic scattering problems will be discussed in \S\ref{sec:Motivation} below. But first we provide a brief overview of the structure of the rest of the paper and a comparison with related literature.

The technical definitions and notation for the function spaces, norms, trace and jump operators, layer potentials and BIOs that we study are presented in \S\ref{sec:prelims}. Our assumption that the screen $\Gamma$ is planar means that Sobolev spaces on $\Gamma$ can be defined concretely for all orders of Sobolev regularity in terms of Fourier transforms on the hyperplane $\Gamma_\infty$, which we naturally associate with $\R^{n-1}$.  %
Furthermore, the planarity of $\Gamma$ also allows us to derive explicit Fourier transform representations for the layer potentials and BIOs, which we present in \S\ref{sec:FourierRep}. These representations facilitate our wavenumber-explicit continuity and coercivity analysis, which is presented in \S\ref{sec:SkAnalysis} and \S\ref{sec:TkAnalysis}.
In this respect our wavenumber-explicit analysis extends that carried out using similar arguments by Ha-Duong in \cite{Ha-Du:90,Ha-Du:92}\footnote{
The authors are grateful to M.\ Costabel for drawing references \cite{Ha-Du:90,Ha-Du:92} to their attention.}.
Indeed, a wavenumber-explicit coercivity estimate for the hypersingular operator $T_k$ of the form \rf{bEst} is proved in \cite[Theorem 2]{Ha-Du:92}, but only for the case $n=3$ and with $\beta=1$ (although the methodology in \cite{Ha-Du:92} can be modified to deal with the case $n=2$, also giving $\beta=1$). We have been able to improve this to $\beta=2/3$ in the case $n=3$ and $\beta=1/2$ in the case $n=2$.
To the best of our knowledge, wavenumber-explicit coercivity estimates for the single-layer operator $S_k$ have not been published before; the fact that $S_k$ is coercive is stated without proof in \cite[Prop.\ 2.3]{Co:04}, with a reference to \cite{Ha-Du:90}, but in \cite{Ha-Du:90} coercivity is only proved for complex wavenumber, the real case being mentioned only in passing (see \cite[p.\ 502]{Ha-Du:90}).
Finally, in \S\ref{sec:NormEstimates} we collect some useful norm estimates in the space $H^{1/2}(\Gamma)$, of relevance in the numerical analysis of Galerkin boundary element methods (BEMs) based on the BIE reformulation \rf{BIE_sl} of the Dirichlet screen problem (see \cite{ScreenBEM} for an application of these results).

We remark that some of the results in this paper were stated without proof in the conference paper \cite{HeCh:13}.

\subsection{Motivation}
\label{sec:Motivation}
The wavenumber-explicit analysis presented in this paper forms part of a wider effort in the rigorous mathematical analysis of BIE methods for high frequency acoustic scattering problems (for a recent review of this area see e.g.\ \cite{ChGrLaSp:11}). Typically (and this is the approach we adopt here) one reformulates the scattering problem as an integral equation, which may be written in operator form as
\begin{equation} \label{eq:ieab}
\cA \phi = f,
\end{equation}
where $\phi$ and $f$ are complex-valued functions defined on the boundary $\Gamma$ of the scatterer. A standard and appropriate functional analysis framework
is that the solution $\phi$ is sought in some Hilbert space $V$, with $f\in V^*$, the dual space of $V$ (the space of continuous antilinear functionals), and $\cA:V\to V^*$ a bounded linear BIO.\footnote{A concrete example is the standard Brakhage-Werner formulation \cite{BraWer:65,ChGrLaSp:11} of sound-soft acoustic scattering by a bounded, Lipschitz obstacle, in which case $V=V^* = L^2(\Gamma)$, and
\begin{equation} \label{eq:ABW}
\cA = \frac{1}{2}I + D_k -\ri \eta S_k,
\end{equation}
with $S_k$ and $D_k$ the standard acoustic single- and double-layer BIOs, $I$ the identity operator, and $\eta\in \R\setminus\{0\}$ a coupling parameter.}
Equation \rf{eq:ieab} can be restated in weak (or variational) form as
\begin{align}
\label{WeakForm}
a(\phi, \varphi) = f(\varphi), \quad \textrm{for all } \varphi\in V,
\end{align}
in terms of the sesquilinear form
\begin{align*}
\label{}
a(\phi, \varphi):=(\cA \phi)(\varphi), \quad \phi,\varphi\in V.
\end{align*}
The Galerkin method for approximating \rf{WeakForm} seeks a solution $\phi_N\in V_N\subset V$, where $V_N$ is a finite-dimensional subspace, requiring that
\begin{align}
\label{WeakFormGalerkin}
a(\phi_N, \varphi_N) = f(\varphi_N), \quad \textrm{for all } \varphi_N\in V_N.
\end{align}

The sesquilinear form $a$ is clearly bounded with continuity constant equal to $\|\cA\|_{V\to V^*} $.
We say that $a$ (and the associated bounded linear operator $\cA$) is \emph{coercive} if, for some $\gamma>0$ (called the \emph{coercivity constant}), it holds that
\begin{align*}
\label{}
|a(\phi,\phi)|\geq \gamma\|\phi\|_{V}^2, \quad \textrm{for all } \phi\in V.
\end{align*}
In this case, the Lax-Milgram lemma implies that \rf{WeakForm} (and hence \rf{eq:ieab}) has exactly one solution $\phi\in V$, and that $\|\phi\|_{V} \leq \gamma^{-1} \|f\|_{V^*}$, i.e.\ $\|A^{-1}\|_{V^*\to V} \leq \gamma^{-1}$.
Furthermore, by C\'ea's lemma, the existence and uniqueness of the Galerkin solution $\phi_N$ of \rf{WeakFormGalerkin} is then also guaranteed for any finite-dimensional approximation space $V_N$, and there holds the quasi-optimality estimate
\begin{align}
\label{QuasiOpt}
\|\phi -\phi_N\|_{V} \leq \frac{\|\cA\|_{V\to V^*}}{\gamma} \inf_{\varphi_N\in V_N} \| \phi- \varphi_N\|_{V}.
\end{align}

One major thrust of recent work (for a review see \cite{ChGrLaSp:11}) has been to attempt to prove wavenumber-explicit continuity and coercivity estimates for BIE formulations of scattering problems, which, by the above discussion, lead to wavenumber-explicit bounds on the condition number $\|A\|_{V\to V^*}\|A^{-1}\|_{V^*\to V}$ and the quasi-optimality constant $\gamma^{-1}\|\cA\|_{V\to V^*}$.
\footnote{Such estimates have recently been proved \cite{SpSm:11} for the operator \rf{eq:ABW} for the case where the scatterer is strictly convex and $\Gamma$ is sufficiently smooth. We also note that in \cite{SCWGS11} a new formulation for sound-soft acoustic scattering, the so-called `star-combined' formulation, has been shown to be coercive on $L^2(\Gamma)$ for all star-like Lipschitz scatterers.}
This effort is motivated by the fact that these problems are computationally challenging when the wavenumber $k>0$ (proportional to the frequency) is large, and that such wavenumber-explicit estimates are useful for answering certain key numerical analysis questions, for instance:
\begin{enumerate}[(a)]
\item Understanding the behaviour of iterative solvers (combined with matrix compression techniques such as the fast multipole method) at high frequencies, in particular understanding the dependence of iteration counts on parameters related to the wavenumber. This behaviour depends, to a crude first approximation, on the condition number of the associated matrices, which is in part related to the wavenumber dependence of the norms of the BIOs and their inverses at the continous level (\cite{BeChGrLaLi:11,SpSm:11}).
\item Understanding the accuracy of conventional BEMs (based on piecewise polynomial approxiomation spaces) at high frequencies by undertaking a rigorous numerical analysis which teases out the joint dependence of the error on the number of degrees of freedom and the wavenumber $k$. For example, is it enough to increase the degrees of freedom in proportion to $k^{d-1}$ in order to maintain accuracy, maintaining a fixed number of degrees of freedom per wavelength in each coordinate direction? See e.g.\ \cite{LohMel:11,GrLoMeSp:13} for some recent results in this area.
\item Developing, and justifying by a complete numerical analysis, novel BEMs for high frequency scattering problems based on the so-called `hybrid numerical-asymptotic' (HNA) approach, the idea of which is to use an approximation space enriched with oscillatory basis functions, carefully chosen to capture the high frequency solution behaviour. The aim is to develop algorithms for which the number of degrees of freedom $N$ required to achieve any desired accuracy be fixed or increase only very mildly as $k\to\infty$. This aim is provably achieved in certain cases, mainly 2D so far; see, e.g., \cite{DHSLMM,NonConvex} and the recent review \cite{ChGrLaSp:11}. For 2D screen and aperture problems we recently proposed in \cite{ScreenBEM} an HNA BEM which provably achieves a fixed accuracy of approximation with $N$ growing at worst like $\log^2{k}$ as $k\to\infty$, our numerical analysis using the wavenumber-explicit estimates of the current paper.
Numerical experiments demonstrating the effectiveness of HNA approximation spaces for a 3D screen problem have been presented in \cite[\S7.6]{ChGrLaSp:11}.
\end{enumerate}

Clearly the results in this paper are a contribution to this endeavour. In particular, Theorems \ref{ThmSNormSmooth} and \ref{ThmTNormSmooth} provide upper bounds on
$
\|S_k\|_{\tilde H^{-1/2}(\Gamma)\to H^{1/2}(\Gamma)}$ and $\|T_k\|_{\tilde H^{1/2}(\Gamma)\to H^{-1/2}(\Gamma)}$ (with $\tilde H^{\pm 1/2}(\Gamma)$ and $H^{\pm 1/2}(\Gamma)$ equipped with the wavenumber-dependent norms specified in \S\ref{sec:prelims}).
Further, as noted generically above, through providing lower bounds on the coercivity constant $\gamma$, Theorems \ref{ThmCoerc} and \ref{ThmTCoercive} bound the inverses of these operators. Thus our results also provide bounds on condition numbers: in particular, for every $c_0>0$, our results show that, for $kL\geq c_0$,
\begin{align} \label{eq:cond}
\mathrm{cond}\,S_k := \|S_k\|_{\tilde H^{-1/2}(\Gamma)\to H^{1/2}(\Gamma)}\|S_k\|_{H^{1/2}(\Gamma)\to \tilde H^{-1/2}(\Gamma)} \leq C (kL)^{1/2},\\
\mathrm{cond}\,T_k := \|T_k\|_{\tilde H^{1/2}(\Gamma)\to H^{-1/2}(\Gamma)}\|T_k\|_{H^{-1/2}(\Gamma)\to \tilde H^{1/2}(\Gamma)} \leq C (kL)^{-\beta},
\end{align}
where $\beta$ is given by \eqref{eq:beta} and $C>0$ is a constant that depends only on $c_0$. Further, Remarks \ref{rem:sharpness1}, \ref{SRemcoer}, \ref{TRem} and \ref{rem:sharpness7} below suggest that these upper bounds on  $\mathrm{cond}\,S_k$ and, in the case $n=2$, also the upper bound on $\mathrm{cond}\,T_k$, are sharp in their dependence on $k$.
\section{Preliminaries}
\label{sec:prelims}
In this section we define the Sobolev spaces, trace and jump operators, layer potentials and boundary integral operators studied in the paper.
\subsection{Sobolev spaces}
\label{sec:SobolevSpaces}
Our analysis is in the context of the Sobolev spaces $H^{s} (\Gamma)$ and $\tilde{H}^s(\Gamma)$ for $s\in\R$. We set out here our notation and the basic definitions; for more detail (especially when $\Gamma$ is non-regular) see \cite{ChaHewMoi:13} and \cite[\S2]{CoercScreen}.
Given $n\in \N$, let $\scrD(\R^n):=C^\infty_0(\R^n)$ denote the space of compactly supported smooth test functions on $\R^n$, and let $\scrS(\R^n)$ denote the Schwartz space of rapidly decaying smooth test functions.
For $s\in \R$ let $H^s(\R^n)$ denote the Bessel potential space of those tempered distributions $u\in \scrS^*(\R^n)$ (continuous \emph{antilinear} functionals on $\scrS(\R^n)$) whose Fourier transforms are locally integrable and satisfy
\mbox{$\int_{\R^n}(1+|\bxi|^2)^{s}\,|\hat{u}(\bxi)|^2\,\rd \bxi < \infty.$} (For $s\geq0$ these spaces can be defined equivalently in terms of integrability of weak partial derivatives, the link between the two definitions relying on Plancherel's theorem - see e.g.\ \cite[Theorem 3.16]{McLean} and equation \rf{eqn:H1knorm} below.)
Our convention for the Fourier transform is that
\mbox{$\hat{u}(\bxi) := (2\pi)^{-n/2}\int_{\R^n}\re^{-\ri \bxi\cdot \bx}u(\bx)\,\rd \bx$}, for $u\in \scrS(\R^n)$ and $\bxi\in\R^n$.
In line with many other analyses of high frequency scattering, e.g., \cite{Ih:98}, %
we work with wavenumber-dependent norms. Specifically, we use the norm on $H^s(\R^n)$ defined by
\begin{align}
\norm{u}{H_k^{s}(\R^n)}^2 :=\int_{\R^n}(k^2+|\bxi|^2)^{s}\,|\hat{u}(\bxi)|^2\,\rd \bxi.
\end{align}
We emphasize that $\norm{\cdot}{H^{s}(\R^n)}:= \norm{\cdot}{H_1^{s}(\R^n)}$ is the standard norm on $H^s(\R^n)$, and
that, for $k>0$, $\norm{\cdot}{H_k^{s}(\R^n)}$ is another, equivalent, norm on $H^s(\R^n)$. Explicitly,%
\begin{equation} \label{eq:normequiv}
  \min\{1,k^s\}\norm{u}{H^{s}(\R^n)} \leq \norm{u}{H^{s}_k(\R^n)} \leq \max\{1,k^s\}  \norm{u}{H^{s}(\R^n)},  \mbox{  for }u \in H^s(\R^n).
\end{equation}
The use of $\norm{\cdot}{H_k^{s}(\R^n)}$ instead of $\norm{\cdot}{H_1^{s}(\R^n)}$ in high frequency scattering applications is natural because solutions $u$ of \rf{eq:he} typically oscillate with a wavelength inversely proportional to $k$. Thus, when $k$ is large, $m$th partial derivatives of $u$ are generically $\ord{k^m}$ times larger in absolute value than $u$ itself, which, for $s>0$, leads to $\norm{\cdot}{H_1^{s}(\R^n)}$ being dominated by the behaviour of the higher derivatives of $u$, which is usually undesirable. The $k^2$ appearing in the definition of $\norm{\cdot}{H_k^{s}(\R^n)}$ redresses the balance between lower and higher derivatives. Concretely, in the case $s=1$ it holds by Plancherel's theorem that
\begin{align}
\label{eqn:H1knorm}
\norm{u}{H_k^{1}(\R^n)}^2 =k^2\int_{\R^n}|u(\bx)|^2\,\rd \bx + \int_{\R^n}|\nabla u(\bx)|^2\,\rd \bx, \qquad u\in H^1(\R^n).
\end{align}
If $k$ is large, with $u=\ord{1}$ and $\nabla u=\ord{k^{-1}}$, then the two terms on the right-hand side of \rf{eqn:H1knorm} will be approximately in balance. 
In light of \rf{eqn:H1knorm}, the use of $k$-dependent norms is also natural from a physical point of view: for a solution $u$ of \rf{eq:he} representing an acoustic wave field and a bounded open set $\Omega\subset\R^n$ the quantity
\begin{align*}
k^2\int_{\Omega}|u(\bx)|^2\,\rd \bx + \int_{\Omega}|\nabla u(\bx)|^2\,\rd \bx
\end{align*}
is proportional to the (time-averaged) acoustic energy contained in $\Omega$.

It is standard that $\scrD(\R^n)$ %
is dense in $H^s(\R^n)$.
It is also standard (see, e.g.,~\cite{McLean}) that $H^{-s}(\R^n)$ is a natural isometric realisation of $(H^s(\R^n))^*$, the dual space of bounded antilinear functionals on $H^s(\R^n)$, in the sense that the mapping $u\mapsto u^*$  from $H^{-s}(\R^n)$ to $(H^s(\R^n))^*$, defined by
\begin{align}
\label{DualDef}
u^*(v) := \left\langle u, v \right\rangle_{H^{-s}(\R^n)\times H^{s}(\R^n)}:= \int_{\R^n}\hat{u}(\bxi) \overline{\hat{v}(\bxi)}\,\rd \bxi, \quad v\in H^s(\R^n),
\end{align}
is a unitary isomorphism. The duality pairing $\left\langle \cdot, \cdot \right\rangle_{H^{-s}(\R^n)\times H^{s}(\R^n)}$ defined in \rf{DualDef} represents a natural extension of the $L^2(\R^n)$ inner product in the sense that if $u_j, v_j\in L^2(\R^n)$ for each $j$ and $u_j\to u$ and $v_j\to v$ as $j\to\infty$, with respect to the norms on $H^{-s}(\R^n)$ and $H^{s}(\R^n)$ respectively, then $\left\langle u, v \right\rangle_{H^{-s}(\R^n)\times H^{s}(\R^n)}=\lim_{j\to\infty}\left(u_j, v_j \right)_{L^2(\R^n)}$.

We define two Sobolev spaces on $\Omega$ when $\Omega$ is a non-empty open subset of $\R^n$. First, let $\scrD(\Omega):=C^\infty_0(\Omega)=\{u\in\scrD(\R^n):\supp{U}\subset\Omega\}$, and let $\scrD^*(\Omega)$ denote the associated space of distributions (continuous antilinear functionals on $\scrD(\Omega)$).  We set
$$
H^s(\Omega):=\{u\in\scrD^*(\Omega): u=U|_\Omega \mbox{ for some }U\in H^s(\R^n)\},
$$
where $U|_\Omega$ denotes the restriction of the distribution $U$ to $\Omega$ (cf.\ %
\cite[p.~66]{McLean}), with norm%
\begin{align*}
\|u\|_{H_k^{s}(\Omega)}:=\inf_{U\in H^s(\R^n), \,U|_{\Omega}=u}\normt{U}{H_k^{s}(\R^n)}.
\end{align*}
Then $\scrD(\overline\Omega):=\{u\in C^\infty(\Omega):u=U|_\Omega \textrm{ for some }U\in\scrD(\R^n)\}$ is dense in $H^s(\Omega)$.
Second, let
\begin{align*}
\label{}
\tilde{H}^s(\Omega):=\overline{\scrD(\Omega)}^{H^s(\R^n)}
\end{align*}
denote the closure of $\scrD(\Omega)$ in the space $H^s(\R^n)$, equipped with the norm %
$\|\cdot\|_{\tilde H^s_k(\Omega)}$:= $\|\cdot\|_{H^s_k(\R^n)}$.
When $\Omega$ is sufficiently regular (e.g.\ when $\Omega$ is $C^0$, cf.\ \cite[Thm 3.29]{McLean}) %
we have that $\tilde H^s(\Omega)=H^s_{\overline\Omega}:=\{u\in H^s(\R^n):\supp{u}\subset\overline{\Omega}\}$. (But for non-regular $\Omega$ $\tilde H^s(\Omega)$ may be a proper subset of $H^s_{\overline\Omega}$, see \cite{ChaHewMoi:13}.)

For $s\in \R$ and $\Omega$ any open, non-empty subset of $\R^n$ it holds that
\begin{align}
\label{isdual}
H^{-s}(\Omega)=(\tilde{H}^s(\Omega))^* \; \mbox{ and }\; \tilde H^{s}(\Omega)=(H^{-s}(\Omega))^*,
\end{align}
in the sense that the natural embeddings
$\mathcal{I}:H^{-s}(\Omega)\to(\tilde{H}^s(\Omega))^*$
and
$\mathcal{I}^*:\tilde{H}^{s}(\Omega)\to(H^{-s}(\Omega))^*$,
\begin{align*}
(\mathcal{I} u)(v)
& := \langle u,v \rangle_{H^{-s}(\Omega)\times \tilde{H}^s(\Omega)}:=\langle U,v \rangle_{H^{-s}(\R^n)\times H^s(\R^n)},\\
 (\mathcal{I}^*v)(u)
&:= \langle v,u \rangle_{\tilde H^{s}(\Omega)\times H^{-s}(\Omega)}:=\langle v,U \rangle_{H^{s}(\R^n)\times H^{-s}(\R^n)},
\end{align*}
where $U\in H^{-s}(\R^n)$ is any extension of $u\in H^{-s}(\Omega)$ with $U|_\Omega=u$, are unitary isomorphisms.
We remark that the representations \eqref{isdual} for the dual spaces are well known when $\Omega$ is sufficiently regular. However, it does not appear to be widely appreciated, at least in the numerical analysis for PDEs community, that \eqref{isdual} holds without constraint on the geometry of~$\Omega$, proof of this given recently in \cite[Thm 2.1]{CoercScreen}.

As alluded to above, Sobolev spaces can also be defined, for $s\geq 0$, as subspaces of $L^2(\R^n)$ satisfying constraints on weak derivatives. In particular, given a non-empty open subset $\Omega$ of $\R^n$, let
$$
W^1(\Omega) := \{u\in L^2(\Omega): \nabla u \in L^2(\Omega)\},
$$
where $\nabla u$ is the weak gradient. An obvious consequence of \rf{eqn:H1knorm} is that $W^1(\R^n)=H^1(\R^n)$.  
Further \cite[Theorem 3.30]{McLean}, $W^1(\Omega)= H^1(\Omega)$ whenever $\Omega$ is a Lipschitz open set. %
It is convenient to define
$$
W^1_{\mathrm{loc}}(\Omega) := \{u\in L^2_{\mathrm{loc}}(\Omega): \nabla u \in L^2_{\mathrm{loc}}(\Omega)\},
$$
where $L^2_{\mathrm{loc}}(\Omega)$ denotes the set of locally integrable functions $u$ on $\Omega$ for which $\int_G|u(\bx)|^2 \rd \bx < \infty$ for every bounded measurable $G\subset \Omega$.

To define Sobolev spaces on the screen $\Gamma \subset \Gamma_\infty:=\{\bx=(x_1,...,x_n)\in \R^n:x_n=0\}$ we make the natural associations of $\Gamma_\infty$ with $\R^{n-1}$ and of $\Gamma$ with $\tGamma:=\{\tilde \bx\in\R^{n-1}: (\tilde \bx,0)\in\Gamma\}$ and set
$H^s(\Gamma_\infty) := H^s(\R^{n-1})$, $H^{s}(\Gamma):=H^{s}(\tGamma)$ and $\tilde{H}^{s}(\Gamma):=\tilde{H}^{s}(\tGamma)$
(with $C^\infty(\Gamma_\infty)$, $\scrD(\Gamma_\infty)$, $\scrD(\overline\Gamma)$ and $\scrD(\Gamma)$ defined analogously).

\subsection{Traces, jumps and boundary conditions}
Letting $U^+:=\{\bx\in\R^n:x_n>0\}$ and $U^-:=\R^n\setminus \overline{U^+}$ denote the upper and lower half-spaces, respectively, %
we define trace operators  $\gamma^\pm:\scrD(\overline{U^\pm})\to \scrD(\Gamma_\infty)$ by $\gamma^\pm u := u|_{\Gamma_\infty}$. It is well known that these extend to bounded linear operators $\gamma^\pm:W^1(U^\pm)\to H^{1/2}(\Gamma_\infty)$.
Similarly, we define normal derivative operators $\partial_{\bn}^\pm:\scrD(\overline{U^\pm})\to \scrD(\Gamma_\infty)$ by
$\partial_{\bn}^\pm u = \pdonetext{u}{x_n}|_{\Gamma_\infty}$ (so the normal points into $U^+$), which
extend  (see, e.g.,\ \cite{ChGrLaSp:11}) to bounded linear operators $\partial_\bn^\pm:W^1(U^\pm;\Delta)\to H^{-1/2}(\Gamma_\infty)=(H^{1/2}(\Gamma_\infty))^*$, where $W^1(U^\pm;\Delta):= \{u\in H^1(U^\pm):\Delta u\in L^2(U^\pm)\}$ and $\Delta u$ is the weak Laplacian.

To define jumps across $\Gamma$, let $u\in L^2_{\rm loc}(\R^n)$ be such that $u|_{U^\pm}\in W_{\rm loc}^1(U^\pm;\Delta)$ with
\begin{align*}
\label{}
\gamma^+(\chi u)|_{\Gamma_\infty\setminus\overline\Gamma} - \gamma^+(\chi u)|_{\Gamma_\infty\setminus\overline\Gamma} = 0 \quad \mbox{ and } \quad
\partial_\bn^+(\chi u)|_{\Gamma_\infty\setminus\overline\Gamma} - \partial_\bn^+(\chi u)|_{\Gamma_\infty\setminus\overline\Gamma} = 0
\end{align*}
for all $\chi\in\scrD(\R^n)$. We then define
\begin{align*}
\label{}
[u]&:=\gamma^+(\chi u)-\gamma^-(\chi u)\in \tH^{1/2}(\Gamma) \\ 
[\pdonetext{u}{\bn}]&:=\partial^+_\bn(\chi u)-\partial^-_\bn(\chi u)\in \tH^{-1/2}(\Gamma),
\end{align*}
where $\chi$ is any element of $\scrD_{1,\Gamma}(\R^n):=\{\phi \in \scrD(\R^n)$: $\phi=1$ in some neighbourhood of $\Gamma\}$.

The boundary conditions \rf{eqn:bc1} and \rf{eqn:bc2} can now be stated more precisely: by \rf{eqn:bc1} and \rf{eqn:bc2} we mean that
\[
\gamma^\pm (\chi u)\vert_\Gamma =g_{\sD},
\qquad \mbox{ and } \qquad
\partial_\bn^\pm (\chi u)\vert_\Gamma =g_{\sN},
\qquad 
\mbox{for every } \chi\in\scrD_{1,\Gamma}(\R^n).
\]

\subsection{Layer potentials and boundary integral operators}
\label{sec:PotsBIOs}
We can now give precise definitions for the single and double layer potentials
\begin{align*}
\label{}
\cS_k:\tH^{-1/2}(\Gamma)\to C^2(D)\cap W^1_{\rm loc}(D),\qquad \cD_k:\tH^{1/2}(\Gamma)\to C^2(D)\cap W^1_{\rm loc}(D),
\end{align*}
namely
\begin{align*}
\label{}
\cS_k\phi (\bx)&:=\left\langle \gamma^\pm(\rho\Phi(\bx,\cdot))|_\Gamma,\overline{\phi}\right\rangle_{H^{1/2}(\Gamma)\times \tilde{H}^{-1/2}(\Gamma)}\\
&=\left\langle \gamma^\pm(\rho\Phi(\bx,\cdot)),\overline{\phi}\right\rangle_{H^{1/2}(\Gamma_\infty)\times H^{-1/2}(\Gamma_\infty)}, \qquad \bx\in D,\, \phi\in \tH^{-1/2}(\Gamma),\\
\cD_k\psi (\bx)&:=\left\langle \psi, \overline{\partial^\pm_\bn(\rho\Phi(\bx,\cdot))|_\Gamma}\right\rangle_{\tilde{H}^{1/2}(\Gamma)\times H^{-1/2}(\Gamma)}\\
&=\left\langle \psi, \overline{\partial^\pm_\bn(\rho\Phi(\bx,\cdot))}\right\rangle_{H^{1/2}(\Gamma_\infty)\times H^{-1/2}(\Gamma_\infty)}, \qquad \bx\in D,\, \psi\in \tH^{1/2}(\Gamma),
\end{align*}
where $\rho$ is any element of $\scrD_{1,\Gamma}(\R^n)$ with $\bx\not\in \supp{\rho}$.
The single-layer and hypersingular boundary integral operators
\begin{align*}
\label{}
S_k:\tH^{-1/2}(\Gamma)\to H^{1/2}(\Gamma),\qquad T_k:\tH^{1/2}(\Gamma)\to H^{-1/2}(\Gamma),
\end{align*}
are then defined by
\begin{align*}
S_k\phi :=\gamma^\pm(\chi\cS_k\phi)|_\Gamma, \quad \phi\in \tH^{-1/2}(\Gamma),
\quad
T_k\phi :=\partial^\pm_\bn(\chi\cD_k\psi)|_\Gamma, \quad \psi\in \tH^{1/2}(\Gamma),
\end{align*}
where $\chi$ is any element of $\scrD_{1,\Gamma}(\R^n)$, and either of the $\pm$ traces may be taken. When $\phi,\psi \in \scrD(\Gamma)$, it follows from \cite[p.~202]{McLean} and \cite[Theorems 2.12 and 2.23]{CoKr:83} that $\cS_k\phi$, $\cD_k\psi$, $S_k\phi$, and $T_k\psi$ are given explicitly by \eqref{SkPotIntRep} and \eqref{SDef}.

\section{Fourier representations for layer potentials and BIOs}
\label{sec:FourierRep}
Our approach to proving the $k$-explicit continuity and coercivity results in Theorems \ref{ThmSNormSmooth}-\ref{ThmTCoercive} is to make use of the fact that, because the screen is planar, the single and double layer potentials and the single-layer and hypersingular BIOs can be expressed in terms of Fourier transforms, this observation captured in the following Theorem \ref{ThmFourierRep}. We note that the parts of this theorem relating to $T_k$ were stated and proved for the case $n=3$ in \cite[Theorems 1 and 2]{Ha-Du:92}. For completeness we include below a short direct proof of the whole theorem, which introduces notation and formulae which prove useful in later sections. We remark that an alternative method of proving \rf{SPotFourierRep} and \rf{DPotFourierRep} would be to observe, using elementary arguments and standard properties of single- and double-layer potentials \cite{CoKr:83}, that the left and right hand sides of each equation satisfy the same boundary value problems for the Helmholtz equation in $D$, with the same Neumann data on $\Gamma$ in the case of \eqref{SPotFourierRep}, the same Dirichlet data on $\Gamma$ in the case of \eqref{DPotFourierRep}, so that the right and left hand sides must coincide. (This argument is most easily done for $k$ replaced by $k+\ri\eps$, with $\eps>0$, and then the result for real wavenumber obtained by taking the limit $\eps\to 0^+$, using the dominated convergence theorem.) 
\begin{thm}
\label{ThmFourierRep}
Let $\phi\in \scrD (\Gamma)$. Then
\begin{align}
\label{SPotFourierRep}
\cS_k\phi(\bx)&=\frac{\ri}{2(2\pi)^{(n-1)/2}}\int_{\R^{n-1}}\frac{\re^{\ri (\bxi\cdot \tbx+|x_n| Z(\bxi))}}{Z(\bxi)}\widehat{\varphi}(\bxi)\,\rd \bxi,  & &\bx=(\tbx,x_n)\in D,\\
\label{DPotFourierRep}
\cD_k\phi(\bx)&=\frac{\sign{x_n}}{2(2\pi)^{(n-1)/2}}\int_{\R^{n-1}}\re^{\ri (\bxi\cdot \tbx+|x_n| Z(\bxi))}\widehat{\varphi}(\bxi)\,\rd \bxi, & & \bx=(\tbx,x_n)\in D,
\end{align}
where $\hat{}$ represents the Fourier transform with respect to $\tbx\in \R^{n-1}$ and
\begin{align}
\label{ZDef}
Z(\bxi):=
\begin{cases}
\sqrt{k^2-|\bxi|^2}, & |\bxi|\leq k\\
\ri\sqrt{|\bxi|^2-k^2}, & |\bxi|> k,
\end{cases}
\qquad \bxi\in \R^{n-1}.
\end{align}
The operators $S_k,T_k:\scrD (\Gamma) \to \scrD (\overline\Gamma)$ satisfy $S_k\phi = (S_k^\infty \phi)|_\Gamma$ and $T_k\phi = (T_k^\infty \phi)|_\Gamma$, where $S_k^\infty, T_k^\infty:\scrD(\R^{n-1})\to C^\infty(\R^{n-1})$ are the pseudodifferential operators defined for $\varphi\in \scrD(\R^{n-1})$ by
\begin{align}
\label{SFourierRep}
S_k^\infty\varphi(\tbx)&=\frac{\ri}{2(2\pi)^{(n-1)/2}}\int_{\R^{n-1}}\frac{\re^{\ri \bxi\cdot \tbx}}{Z(\bxi)}\widehat{\varphi}(\bxi)\,\rd \bxi, & & \tbx\in\R^{n-1},\\
\label{TFourierRep}
T_k^\infty\varphi(\tbx)&=\frac{\ri}{2(2\pi)^{(n-1)/2}}\int_{\R^{n-1}}Z(\bxi)\re^{\ri \bxi\cdot \tbx}\widehat{\varphi}(\bxi)\,\rd \bxi, & & \tbx\in\R^{n-1}.%
\end{align}
Furthermore, for $\phi, \psi\in \scrD(\Gamma)$ we have that
\begin{align}
\label{SIPFourierRep}
(S_k\phi,\psi)_{L^2(\Gamma)} &= \frac{\ri}{2 }\int_{\R^{n-1}} \frac{1}{Z(\bxi)} \widehat{\phi}(\bxi) \overline{\widehat{\psi}(\bxi)} \,\rd \bxi,\\
\label{TIPFourierRep}
(T_k\phi,\psi)_{L^2(\Gamma)} &= \frac{\ri}{2 }\int_{\R^{n-1}} Z(\bxi) \widehat{\phi}(\bxi) \overline{\widehat{\psi}(\bxi)} \,\rd \bxi.
\end{align}
\end{thm}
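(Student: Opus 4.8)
The plan is to prove Theorem~\ref{ThmFourierRep} by a direct computation starting from the explicit integral representations \eqref{SkPotIntRep} and \eqref{SDef}, exploiting the decomposition of the fundamental solution $\Phi$ as a superposition of plane waves travelling away from the hyperplane $\Gamma_\infty$. The first and central step is to establish the \emph{Weyl--Sommerfeld identity}
\begin{align*}
\Phi(\bx,\by) = \frac{\ri}{2(2\pi)^{n-1}}\int_{\R^{n-1}} \frac{\re^{\ri(\bxi\cdot(\tbx-\tby)+|x_n-y_n|Z(\bxi))}}{Z(\bxi)}\,\rd\bxi,\qquad \bx=(\tbx,x_n),\ \by=(\tby,y_n),
\end{align*}
valid for $x_n\neq y_n$, with $Z$ as in \eqref{ZDef}. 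I would derive this by first replacing $k$ with $k+\ri\eps$, $\eps>0$, so that $Z$ has strictly positive imaginary part for all $\bxi$ and the $\bxi$-integral converges absolutely (with exponential decay in $|\bxi|$ once $|x_n-y_n|>0$); one checks that the right-hand side solves the Helmholtz equation in each half-space $\{x_n>y_n\}$, $\{x_n<y_n\}$, is continuous across $x_n=y_n$, has the correct jump $-1$ in its normal derivative there (a one-dimensional Fourier inversion), and satisfies the radiation/decay condition, hence equals the (unique) fundamental solution; then let $\eps\to0^+$ using dominated convergence. (Alternatively one can cite the standard references for this identity; either way this is the one genuinely nontrivial ingredient.)

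Granted the Weyl identity, the rest is bookkeeping with Fubini's theorem. For \eqref{SPotFourierRep}, for $\bx\in D$ (so $x_n\neq 0$ off the screen, and more care is needed when $x_n=0$ but $\tbx\notin\tGamma$ --- there one uses $\supp\phi\subset\tGamma$ and approximates, or simply notes $x_n\to0$ continuity from the representation) I substitute the Weyl identity into $\cS_k\phi(\bx)=\int_\Gamma\Phi(\bx,\by)\phi(\by)\,\rd s(\by)$, interchange the $\by$- and $\bxi$-integrals (justified by absolute convergence after the $k+\ri\eps$ regularisation and compact support of $\phi$), and recognise $\int_{\R^{n-1}}\re^{-\ri\bxi\cdot\tby}\phi(\tby)\,\rd\tby=(2\pi)^{(n-1)/2}\widehat{\varphi}(\bxi)$, which collapses the $2(2\pi)^{n-1}$ to $2(2\pi)^{(n-1)/2}$ and yields exactly \eqref{SPotFourierRep}. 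Formula \eqref{DPotFourierRep} follows identically after applying $\partial/\partial n(\by)=\partial/\partial y_n$ to the Weyl kernel, which brings down a factor $\ri Z(\bxi)\,\sign(y_n-x_n)/Z(\bxi)=-\ri\,\sign(x_n-y_n)$ --- wait, one must track signs carefully: $\partial_{y_n}|x_n-y_n|=-\sign(x_n-y_n)=\sign(x_n)$ for $y_n=0$ and $x_n\neq0$, giving the $\sign{x_n}$ and cancelling the $Z$ in the denominator against the $Z$ from differentiating the exponent, leaving $\tfrac{\sign{x_n}}{2(2\pi)^{(n-1)/2}}\int\re^{\ri(\bxi\cdot\tbx+|x_n|Z)}\widehat{\varphi}\,\rd\bxi$ as claimed.

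For the boundary operators, $S_k^\infty\varphi$ and $T_k^\infty\varphi$ are obtained by taking $x_n\to0$ in \eqref{SPotFourierRep}--\eqref{DPotFourierRep}: in \eqref{SPotFourierRep} setting $x_n=0$ directly gives \eqref{SFourierRep}; for $T_k^\infty$ one applies $\partial/\partial x_n$ to \eqref{DPotFourierRep} before setting $x_n=0$, which produces the factor $\ri Z(\bxi)$ and gives \eqref{TFourierRep} (the $\sign x_n$ and the $\partial_{x_n}|x_n|$ combine to an even function, so the $\pm$ limits agree, consistent with the well-definedness asserted in \S\ref{sec:PotsBIOs}). That $S_k^\infty\varphi,T_k^\infty\varphi\in C^\infty(\R^{n-1})$ follows since $\widehat\varphi\in\scrS(\R^{n-1})$ decays faster than any polynomial while $1/Z(\bxi)$ and $Z(\bxi)$ grow at most linearly, so differentiation under the integral sign is legitimate; and $S_k\phi=(S_k^\infty\phi)|_\Gamma$, $T_k\phi=(T_k^\infty\phi)|_\Gamma$ by the definitions in \S\ref{sec:PotsBIOs} together with the explicit-kernel identities quoted there from \cite{McLean,CoKr:83}. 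Finally, \eqref{SIPFourierRep} and \eqref{TIPFourierRep} follow from \eqref{SFourierRep}--\eqref{TFourierRep} by Parseval's theorem: $(S_k\phi,\psi)_{L^2(\Gamma)}=(S_k^\infty\phi,\psi)_{L^2(\R^{n-1})}=\tfrac{\ri}{2}\int_{\R^{n-1}}\tfrac{1}{Z(\bxi)}\widehat\phi(\bxi)\overline{\widehat\psi(\bxi)}\,\rd\bxi$, using $\supp\psi\subset\tGamma$ so the $L^2(\Gamma)$ and $L^2(\R^{n-1})$ pairings coincide, and the fact that the Fourier-multiplier form of $S_k^\infty$ from \eqref{SFourierRep} is exactly convolution against the multiplier $\tfrac{\ri}{2Z}$; likewise for $T_k$. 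The main obstacle is the rigorous justification of the Weyl identity and the Fubini interchanges at real wavenumber $k$ (where $Z$ vanishes on $|\bxi|=k$ and the integrands are only conditionally convergent); the $k+\ri\eps$ regularisation plus a dominated-convergence passage to the limit is the device that handles this cleanly, and I would state that reduction explicitly at the outset of the proof.
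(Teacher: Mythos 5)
Your argument is correct, and it reaches the theorem by a genuinely different route from the paper's own proof --- in fact it is essentially the alternative route that the paper sketches (but does not carry out) in the paragraph immediately preceding the theorem statement. The paper writes $\cS_k\phi(\bx)=(\Phi_c(\cdot,x_n)\ast\phi)(\tbx)$ and then computes the Fourier transform $\widehat{\Phi_c}(\bxi,x_n)=\ri\,\re^{\ri|x_n|Z(\bxi)}/\bigl(2(2\pi)^{(n-1)/2}Z(\bxi)\bigr)$ directly, via the radial Fourier-transform formula \eqref{FTRadial} together with tabulated Bessel/Hankel integrals from Gradshteyn--Ryzhik and the DLMF; everything else (the $y_n$-derivative for $\cD_k$, the traces, and the distributional Parseval identity for \eqref{SIPFourierRep}--\eqref{TIPFourierRep}) is then identical in spirit to what you do. Your central lemma --- the Weyl--Sommerfeld plane-wave decomposition of $\Phi$, proved by a BVP-uniqueness argument after the $k\mapsto k+\ri\eps$ regularisation and a dominated-convergence passage to real $k$ --- is precisely the statement that $\widehat{\Phi_c}$ has the above form, so the two proofs establish the same identity by different means. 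What each buys: the paper's computation is self-contained modulo citing standard integral identities and requires no uniqueness theory; yours avoids special-function tables entirely but must invoke uniqueness for the Helmholtz equation with $\mathrm{Im}\,k>0$ and justify the limit $\eps\to0^+$. Two small points of care in your write-up: your sign-tracking for $\partial_{y_n}|x_n-y_n|$ lands on the correct $\sign x_n$ after a visible mid-course correction (the paper sidesteps this by using $\partial\Phi/\partial y_n=-\partial\Phi/\partial x_n$); and in the Parseval step $S_k^\infty\phi$ is generally not in $L^2(\R^{n-1})$, so the identity $\int S_k^\infty\phi\,\overline{\psi}=\int\widehat{S_k^\infty\phi}\,\overline{\widehat\psi}$ should be read, as the paper does explicitly, via the Fourier transform of tempered distributions paired against $\widehat\psi\in\scrS(\R^{n-1})$.
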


\begin{proof} %
Using \rf{SkPotIntRep}, we see that, for $\phi\in\scrD(\Gamma)$, %
\begin{align}
\label{}
\cS_k\phi(\bx) = (\Phi_c(\cdot,x_n)\ast \phi)(\tbx),
\end{align}
where $\ast$ indicates a convolution over $\R^{n-1}$ (with $x_n$ treated as a parameter) and
\begin{align*}
\label{}
\Phi_c(\tbx,x_n) :=\Phi((\tbx ,x_n),\bs{0}) =
\begin{cases}
\dfrac{\re^{\ri k\sqrt{r^2+x_n^2}}}{4\pi \sqrt{r^2+x_n^2}}, & n=3,\\[3mm]
\dfrac{\ri }{4}H_0^{(1)}(k\sqrt{r^2+x_n^2}), & n=2,
\end{cases}
\qquad  r=|\tbx |,\, \tbx \in\R^{n-1}.
\end{align*}
Hence the Fourier transform (with respect to $\tbx\in\R^{n-1}$) of $\cS_k \phi$ is given by the product
\begin{align*}
\widehat{\cS_k \phi}(\bxi,x_n) = (2\pi)^{(n-1)/2}\,\widehat{\Phi_c}(\bxi,x_n)\hat{\varphi}(\bxi).
\end{align*}
To evaluate $\widehat{\Phi_c}$ we note that for a function $f(\bx )=F(r)$, where $r=|\bx |$ for $\bx \in\R^{d}$, $d=1,2$, the Fourier transform of $f$ is given by (cf.\ \cite[\S{B.5}]{Grafakos})
\footnote{Strictly speaking, \cite[\S{B.5}]{Grafakos} only provides \rf{FTRadial} for $f\in L^1(\R^d)$. %
But for the functions $f=\Phi_c(\cdot,x_n)$ one can check using the dominated convergence theorem that \rf{FTRadial} holds. %
}
\begin{align}
\label{FTRadial}
\hat{f}(\bxi)=
\begin{cases}
\displaystyle{\int_0^\infty F(r)J_0(|\bxi|r)r\,\rd r}, & d=2,\\[3mm]
\displaystyle{
\sqrt{\frac{2}{\pi}}\int_0^\infty F(r)\cos(\bxi r)\,\rd r}, & d=1.
\end{cases}
\end{align}
This result, combined with the identities \cite[(6.677), (6.737)]{Ryzhik}
and \cite[(10.16.1), (10.39.2)]{DLMF},
gives
\begin{align*}
\label{}
\widehat{\Phi_c}(\bxi,x_n)= \dfrac{\ri \,\re^{\ri |x_n| Z(\bxi)}}{2(2\pi)^{(n-1)/2}Z(\bxi)},
\end{align*}
where $Z(\bxi)$ is defined as in \rf{ZDef}. The representation \rf{SFourierRep} is then obtained by Fourier inversion.

The representation \rf{DPotFourierRep} for $\cD_k\phi$ can be then obtained from \rf{SPotFourierRep} by noting that
\begin{align*}
\label{}
\pdone{\Phi(\bx,\by)}{\bn(y)}=\pdone{\Phi(\bx,\by)}{y_n} = -\pdone{\Phi(\bx,\by)}{x_n}, \qquad \bx\in D, \,\by\in \Gamma,
\end{align*}
and the representations for $S_k$ and $T_k$ follow from taking the appropriate traces of \rf{SPotFourierRep} and \rf{DPotFourierRep}.%

Finally, \rf{SIPFourierRep} and \rf{TIPFourierRep} follow from viewing $S^\infty_k\phi$ and $T^\infty_k\phi$ as elements of $C^\infty(\R^{n-1})\cap \mathscr{S}^*(\R^{n-1})$ and recalling the definition of the Fourier transform of a distribution, e.g., for $S_k$,%
\begin{align*}
(S_k\phi,\psi)_{L^2(\Gamma)}
=  \int_{\R^{n-1}} S_k^\infty\phi(\tbx)\overline{\psi(\tbx)}\, d\tbx
&= \int_{\R^{n-1}} \widehat{S_k^\infty\phi}(\bxi)\overline{\widehat{\psi}(\bxi)}\, d\bxi \\
&= \frac{\ri}{2 }\int_{\R^{n-1}} \frac{1}{Z(\bxi)} \widehat{\phi}(\bxi) \overline{\widehat{\psi}(\bxi)} \,\rd \bxi.
\end{align*}
\end{proof}
\section{$k$-explicit analysis of $S_k$}
\label{sec:SkAnalysis}
Our $k$-explicit analysis of the single-layer operator $S_k$ makes use of the following lemma.
\begin{lem} %
\label{LemPhiFT}
Given $L>0$ let
\begin{align}
\label{PhiADef}
\Phi_L(\tbx ,x_n) : =
\begin{cases}
\Phi_c(\tbx,x_n), & |\tbx |\leq L,\\
0, & |\tbx |>L,
\end{cases}
\end{align}
where $\Phi_c$ is defined as in the proof of Theorem \ref{ThmFourierRep}.
Then there exists a constant $C>0$, independent of $k$, $L$, $\bxi$ and $x_n$, such that, for all $k>0$, $\bxi\in\R^{n-1}$, and $x_n\in \R$,
\begin{align}
\label{EquivBounded}
|\widehat{ \Phi_L}(\bxi,x_n)|\sqrt{k^2+|\bxi|^2} \leq
\begin{cases}
C(1+ (kL)^{1/2}), & n=3,\\
C \left( \log(2+(kL)^{-1}) + (kL)^{1/2}\right), & n=2.\\
\end{cases}
\end{align}

\end{lem}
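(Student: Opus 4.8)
I would start by computing $\widehat{\Phi_L}(\bxi,x_n)$ explicitly, using the radial symmetry of $\Phi_L(\cdot,x_n)$ in $\tbx$ together with the formula \rf{FTRadial} from the proof of Theorem~\ref{ThmFourierRep}. For $n=3$ this gives
\[
\widehat{\Phi_L}(\bxi,x_n)=\frac{1}{4\pi}\int_0^L\frac{\re^{\ri k\sqrt{r^2+x_n^2}}}{\sqrt{r^2+x_n^2}}\,J_0(|\bxi|r)\,r\,\rd r ,
\]
and for $n=2$ the analogous formula with $\tfrac{\ri}{4}H_0^{(1)}(k\sqrt{r^2+x_n^2})$ in place of $\tfrac{1}{4\pi}\re^{\ri k\sqrt{r^2+x_n^2}}/\sqrt{r^2+x_n^2}$ and $\cos(|\bxi|r)$ in place of $J_0(|\bxi|r)\,r$. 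All the estimates I have in mind use only $\sqrt{r^2+x_n^2}\ge r$ and $|\re^{\ri k\sqrt{r^2+x_n^2}}|=1$, so the bounds come out uniform in $x_n$; in effect one works with $x_n=0$. Writing $\kappa:=|\bxi|$, I would split into three regimes: (I)~$\kappa\le k/2$, where $\sqrt{k^2+\kappa^2}\asymp k$; (II)~$k/2<\kappa<2k$, where $\sqrt{k^2+\kappa^2}\asymp k\asymp\kappa$; (III)~$\kappa\ge 2k$, where $\sqrt{k^2+\kappa^2}\asymp\kappa$.

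Regime~(II), $\kappa\approx k$, is the simplest and is where the sharp factor $(kL)^{1/2}$ (and, for $n=2$, the extra $\log$) is born. Taking absolute values inside the integral and using $|J_0(t)|\le C\min(1,t^{-1/2})$ (respectively $|H_0^{(1)}(t)|\le C(1+|\log t|)$ for $t\le 1$, $\le Ct^{-1/2}$ for $t\ge 1$) gives $|\widehat{\Phi_L}(\bxi,x_n)|\lesssim \kappa^{-1}+(L/\kappa)^{1/2}$ when $n=3$, and when $n=2$ a bound by $\int_0^L\Psi(kr)\,\rd r$ with $\Psi$ the corresponding Hankel envelope, which is $\lesssim L\log(2+(kL)^{-1})$ for $kL\le1$ and $\lesssim(L/k)^{1/2}$ for $kL\ge1$; multiplying by $\sqrt{k^2+\kappa^2}\asymp k$ reproduces the right-hand side of \eqref{EquivBounded}. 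In Regime~(I) I would instead exploit the oscillation of $\re^{\ri k\sqrt{r^2+x_n^2}}$: for $n=3$, writing $\tfrac{r}{\sqrt{r^2+x_n^2}}\re^{\ri k\sqrt{r^2+x_n^2}}\,\rd r=\tfrac{1}{\ri k}\rd\big(\re^{\ri k\sqrt{r^2+x_n^2}}\big)$ and integrating by parts once turns the integral into boundary terms of size $O(1/k)$ plus $\tfrac1k\int_0^{\kappa L}|J_1(s)|\,\rd s\lesssim \tfrac1k\big(1+(\kappa L)^{1/2}\big)$, so $\sqrt{k^2+\kappa^2}\asymp k$ yields $\lesssim 1+(kL)^{1/2}$; for $n=2$ the modulus bound of Regime~(II) already suffices here, since again $\sqrt{k^2+\kappa^2}\asymp k$.

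The crux is Regime~(III), $\kappa\gg k$, where one needs genuine decay in $\kappa$, i.e.\ one must use the oscillation of the Fourier kernel $J_0(\kappa r)$ (resp.\ $\cos(\kappa r)$). For $n=3$ I would use the identity $\tfrac{\rd}{\rd r}\big(rJ_1(\kappa r)\big)=\kappa r J_0(\kappa r)$ and integrate by parts once: the boundary term is harmless ($O(\kappa^{-3/2}L^{-1/2})$ for $\kappa L\ge1$), and the remaining integral splits as $\ri k\int_0^L\tfrac{r^2}{r^2+x_n^2}J_1(\kappa r)\re^{\ri k\sqrt{r^2+x_n^2}}\rd r$, bounded by $k\kappa^{-1}\int_0^{\kappa L}|J_1(s)|\,\rd s\lesssim k(L/\kappa)^{1/2}$, minus $\int_0^L\tfrac{r^2}{(r^2+x_n^2)^{3/2}}J_1(\kappa r)\re^{\ri k\sqrt{r^2+x_n^2}}\rd r$, bounded by $\int_0^\infty\tfrac{|J_1(s)|}{s}\,\rd s=O(1)$; thus $|\widehat{\Phi_L}(\bxi,x_n)|\lesssim\kappa^{-1}\big(1+k(L/\kappa)^{1/2}\big)$, and since $\kappa\ge 2k$ multiplication by $\sqrt{k^2+\kappa^2}\asymp\kappa$ gives $\lesssim 1+(kL)^{1/2}$. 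For $n=2$ the analogue is an integration by parts against $\cos(\kappa r)$, but now the $z^{-1}$ singularity of $H_1^{(1)}(z)$ and the $\log$ singularity of $H_0^{(1)}(z)$ at the origin force a separate treatment of the range near $r=0$ — either a second integration by parts using the oscillation of $\sin(\kappa r)$ on $1/\kappa<r<1/k$, or inserting $H_0^{(1)}(z)=\tfrac{2\ri}{\pi}\log z+O(1)$ and invoking the uniform boundedness of $\int_0^M\tfrac{\sin t}{t}\,\rd t$; this is also the source of the extra $\log(2+(kL)^{-1})$ in the $2$D bound. I expect the main obstacle to be exactly this: in Regime~(III) one cannot simply quote the symbol bound $|Z(\bxi)|^{-1}\sim|\bxi|^{-1}$ (what the untruncated transform gives), and instead has to marry the two independent oscillations cleanly — routine in $3$D via the Bessel identity, but genuinely fiddly in $2$D near the origin; everything else is bookkeeping to keep the constants free of $k$, $L$, $\bxi$ and $x_n$, which the pointwise inequalities above secure.
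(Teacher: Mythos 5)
Your proposal is correct and follows essentially the same route as the paper: the explicit radial Fourier integral via \rf{FTRadial}, a case split on $|\bxi|$ versus $k$, absolute-value/monotonicity bounds where the symbol need not decay, integration by parts against the oscillating exponential for small $|\bxi|$ in 3D and against $rJ_1(\kappa r)$ (resp.\ $\cos(\kappa r)$) for large $|\bxi|$, with the $z^{-1}$ singularity of $H_1^{(1)}$ split off in 2D. The only differences are cosmetic (three regimes instead of two, convergence of $\int_0^\infty |J_1(s)|\,s^{-1}\rd s$ in place of the paper's split at $r=1/|\bxi|$), and your second suggested remedy for the 2D large-$|\bxi|$ case is exactly the paper's device of subtracting $-2\ri/(\pi z)$ from $H_1^{(1)}(z)$ and bounding the resulting $\int r\sin(\kappa r)/(r^2+x_2^2)\,\rd r$ by a further integration by parts.
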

\begin{proof}
It is convenient to introduce the notation $\txi :=|\bxi|$, and by $C>0$ we denote an arbitrary constant, independent of $k$, $L$, $\bxi$, and $x_n$, which may change from occurrence to occurrence. To prove \rf{EquivBounded} we proceed by estimating $|\widehat{ \Phi_L}(\bxi,x_n)|$ directly, using the formula \rf{FTRadial}. We treat the cases $n=3$ and $n=2$ separately. We will make use of the following well-known properties of the Bessel functions (cf.\ \cite[Sections 10.6, 10.14, 10.17]{DLMF}), where $\cB_n$ represents either $J_n$ or $H_n^{(1)}$:
\begin{align}
\label{JnEst}
|J_n(z)|\leq 1,& & & n\in\N,\,\,z>0,\\
\label{H0Est}
|H_0^{(1)}(z)|\leq C(1+|\log{z}|),& & & 0<z\leq 1\\
\label{H1Est}
|H_1^{(1)}(z)|\leq Cz^{-1},& & & 0<z\leq 1\\
\label{H1Est2}
\left|H_1^{(1)}(z)+\frac{2\ri}{\pi z}\right| \leq Cz^{-1/2}, & & & z>0,\\
\label{BnEst}
|\cB_n(z)|\leq Cz^{-1/2},& & & n=0,1,\,\,z>1,\\
\label{B0Der}
\cB_0'(z)= - \cB_1(z), & & & z>0,\\
\label{B1Der}
\frac{d}{dz}(z\cB_1(z))= z\cB_0(z), & & & z>0.
\end{align}

(i) In the case $n=3$, $|\widehat{ \Phi_L}(\bxi,x_3)|\leq |I(L)|/(4\pi)$,
where
\begin{align*}
\label{}
I(L):=\int_0^{L} \frac{\re^{\ri \tk \sqrt{r^2+x_3^2}}}{\sqrt{r^2+x_3^2}}J_0(\txi r)\,r\,\rd r, \quad \mbox{for }L>0.
\end{align*}
Using \eqref{JnEst}, we see that
$|I(L)| \leq 1/\txi$, if $L\leq 1/\txi$.
If $L>1/\txi$ then,
integrating by parts using the relation \rf{B1Der}, %
\begin{align*}
\label{}
I(L)-I(1/\txi) &= \frac{1}{\txi}\left[\frac{r \re^{\ri \tk \sqrt{r^2+x_3^2}}}{\sqrt{r^2+x_3^2}}J_1(\txi r)\right]^L_{1/\txi} \\
&\qquad -\frac{1}{\txi} \int_{1/\txi}^L r^2 \re^{\ri \tk \sqrt{r^2+x_3^2}}\left(\frac{\ri k}{r^2+x_3^2}-\frac{1}{(r^2+x_3^2)^{3/2}}\right) J_1(\txi r) \,\rd r,
\end{align*}
so that, substituting $t=\xi r$ and using \eqref{JnEst},
\begin{align}
\label{eq:Ibound2}
|I(L)| \leq |I(1/\xi)| + |I(L)-I(1/\txi)| \leq \frac{3}{\txi} +\frac{1}{\txi} \int_{1}^{\txi L} \left(\frac{k}{\txi}+t^{-1}\right) |J_1(t)| \,\rd t.
\end{align}
Using the bound \eqref{BnEst} in \eqref{eq:Ibound2}, it follows that
$$
|I(L)| \leq \frac{3}{\txi} +\frac{C}{\txi}  \left(\frac{kL^{1/2}}{\txi^{1/2}}+1\right), \qquad L>0,
$$
so that
\begin{align}
\label{PEst3}
|\widehat{ \Phi_L}(\bxi,x_3)|\sqrt{k^2+\xi^2}\leq C(1+ (kL)^{1/2}), \qquad \mbox{for } 0<\tk <\txi.
\end{align}

On the other hand, integrating by parts using the relation \rf{B0Der},%
\begin{align*}
\label{}
I(L) = \frac{1}{\tk}\left[-\ri \re^{\ri \tk \sqrt{r^2+x_3^2}}J_0(\txi r)\right]_0^{L} - \frac{\ri \txi}{\tk}\int_{0}^L \re^{\ri \tk \sqrt{r^2+x_3^2}}J_1(\txi r)  \,\rd r,
\end{align*}
so that, substituting $t=\xi r$ and using \eqref{JnEst},
\begin{align*}
\label{}
|I(L)| \leq \frac{2}{\tk} + \frac{1}{\tk}\int_{0}^{kL} |J_1(t)|  \,\rd t.
\end{align*}
Using \eqref{JnEst} and \eqref{BnEst} we see that \eqref{PEst3} holds also for  $0\leq\txi\leq\tk$, establishing \rf{EquivBounded} in the case $n=3$.

(ii) In the case $n=2$, $|\widehat{ \Phi_L}(\bxi,x_2)| \leq |I(L)|/(2\sqrt{2\pi})$, %
where now
\begin{align*}
\label{}
I(L) :=\int_0^{L} H_0^{(1)}(\tk \sqrt{r^2+x_2^2})\cos(\txi r)\,\rd r.
\end{align*}
Using the monotonicity of $|H_0^{(1)}(z)|$ for $z>0$ \cite[p.~487]{Wa:44}, we see that
\begin{equation} \label{eq:Ibound2D}
|I(L)| \leq \int_0^{L} |H_0^{(1)}(\tk r)|\,\rd r = \frac{1}{k} \int_0^{kL} |H_0^{(1)}(t)|\,\rd t, \qquad L>0.
\end{equation}
Using \eqref{H0Est} and \eqref{BnEst}, we deduce that
\begin{align}
\label{PEst1a}
|\widehat{ \Phi_L}(\bxi,x_2)|\sqrt{k^2+\xi^2}\leq C\left(\log(2+(kL)^{-1}) + (kL)^{1/2}\right), \qquad \mbox{for } 0\leq \txi\leq k.
\end{align}

Further, integrating by parts using the relation \rf{B0Der} gives
\begin{align*}
I(L) &= \frac{1}{\txi}\left[H_0^{(1)}(\tk \sqrt{r^2+x_2^2}) \sin{\txi r}\right]_0^{L}  \\
&\qquad\qquad+ \frac{\tk}{\txi}\int_{0}^L \frac{r}{\sqrt{r^2+x_2^2}}H_1^{(1)}(\tk \sqrt{r^2+x_2^2}) \sin \txi r \,\rd r\\
& =  \frac{1}{\txi}H_0^{(1)}(\tk \sqrt{L^2+x_2^2}) \sin{\txi L}  \\
&\qquad\qquad+ \frac{\tk}{\txi}\int_{0}^L \frac{r}{\sqrt{r^2+x_2^2}}F_1(\tk \sqrt{r^2+x_2^2}) \sin \txi r\, \rd r + I_0(L),
\end{align*}
where $c_0 := -2\ri/\pi$, $F_1(z):= H_1^{(1)}(z) - c_0/z$, and
\begin{align*}
\label{}
I_0(L) := \frac{c_0}{\txi}\int_{0}^L \frac{r \sin \txi r}{r^2+x_2^2} \,\rd r = \frac{c_0}{\txi}\int_{0}^{\txi L} \frac{t \sin t}{t^2+\txi^2x_2^2} \,\rd t.
\end{align*}
Now $|I_0(L)| \leq |c_0|/\txi= 2/(\pi\txi)$, for $L \leq 1/\txi$. For $L > 1/\txi$, integrating by parts,
$$
I_0(L) -I_0(1/\txi) %
= \frac{c_0}{\txi}\left(\left[\frac{-t\cos t}{t^2+\txi^2 x_2^2}\right]_1^{\txi L} + \int_{1}^{\txi L} \frac{(\txi^2x_2^2-t^2) \cos t}{(t^2+\txi^2x_2^2)^2} \,\rd t\right),
$$
so that
\begin{equation*} %
|I_0(L)| \leq \frac{2}{\pi\txi}\left(3 + \int_{1}^{\txi L} \frac{1}{t^2} \,\rd t\right) <\frac{8}{\pi\txi}.
\end{equation*}
Using these bounds on $I_0(L)$ and the bound \eqref{H1Est2} on $F_1(z)$, we see that
\begin{align*}
|I(L)| &\leq \frac{1}{\txi}|H_0^{(1)}(k L)| + \frac{C\tk^{1/2}}{\txi}\int_{0}^L r^{-1/2} \rd r + \frac{8}{\pi\txi} \\ \qquad &= \frac{1}{\txi}\left(|H_0^{(1)}(k L)| + 2C(\tk L)^{1/2} + \frac{8}{\pi}\right),
\end{align*}
for $L>0$. Using the bounds \eqref{H0Est} and \eqref{BnEst}, we conclude that \rf{PEst1a} holds also for $0<\tk<\txi$, establishing \rf{EquivBounded} in the case $n=2$.
\end{proof}

Using this result we can now prove Theorem \ref{ThmSNormSmooth}.

\begin{proof}[Proof of Theorem \ref{ThmSNormSmooth}]
By the density of $\scrD(\Gamma)$ in $\tilde{H}^s(\Gamma)$ it suffices to prove \rf{SEst} for $\phi\in \scrD(\Gamma)$. For $\phi\in \scrD(\Gamma)$ we first note that $S_k\phi = (S_k^L \phi)|_\Gamma$, where $S_k^L:\scrD(\R^{n-1})\to \scrD(\R^{n-1})$ is the convolution operator
defined by
$S_k^L\varphi: = (\Phi_L(\cdot,0)\ast \varphi)$, for $\varphi\in \scrD(\R^{n-1})$, 
where $\Phi_L$ is defined as in \rf{PhiADef}.
While $S_k^\infty\varphi \in C^\infty(\R^{n-1})$ for $\varphi\in \scrD(\R^{n-1})$, the fact that $\Phi_L$ has compact support means that $S_k^L\varphi\in \scrD(\R^{n-1})\subset H^{s+1}(\R^{n-1})$ (cf.\ \cite[Corollary 5.4-2a]{Zemanian}).
Therefore, for $\phi\in \scrD(\Gamma)$ we can estimate
$\norm{S_k \phi}{H^{s+1}_k(\Gamma)}\leq \normt{S_k^L \phi}{H^{s+1}_k(\R^{n-1})}$,
and since
$\widehat{S_k^L \varphi}(\bxi) = \widehat{( \Phi_L(\cdot,0)\ast \varphi)}(\bxi) = (2\pi)^{(n-1)/2}\widehat{ \Phi_L}(\bxi,0)\hat{\varphi}(\bxi)$
for any $\varphi\in \scrD(\R^{n-1})$,
the bound \rf{SEst} follows from Lemma \ref{LemPhiFT}.
\end{proof}

\begin{proof}[Proof of Theorem \ref{ThmCoerc}] 
By the density of $\scrD(\Gamma)$ in $\tilde{H}^{-1/2}(\Gamma)$ it suffices to prove \rf{SCoercive} for $\phi\in \scrD(\Gamma)$.  For such a $\phi$,  noting that $\real{\exp(\ri\pi/4)/Z(\bxi)} = |\sqrt{k^2-|\bxi|^2}\,|/\sqrt{2}$, formula \rf{SIPFourierRep} from Theorem \ref{ThmFourierRep} gives the desired result:
\begin{align}
 |a_\sD(\phi,\phi)|   = |(S_k\phi,\phi)_{L^2(\Gamma)}| %
& \geq %
\frac{1}{2\sqrt{2}} \int_{\R^{n-1}} \frac{|\widehat{\phi}(\bxi)|^2}{\sqrt{|k^2-|\bxi|^2|}} \,\rd \bxi \notag \\
&  \geq\frac{1}{2\sqrt{2}} \int_{\R^{n-1}} \frac{|\widehat{\phi}(\bxi)|^2}{\sqrt{k^2+|\bxi|^2}} \,\rd \bxi.
\label{aCoercProof}
\end{align}
\end{proof}

\begin{rem}\label{rem:sharpness1}%
We can show that the bounds established in Theorem \ref{ThmSNormSmooth} are sharp in their dependence on $k$ as $k\to\infty$. For simplicity of presentation we assume that $\diam \Gamma=1$ and $k>1$. Let $\phi(\tbx):=\re^{\ri k\tbd\cdot \tbx}\psi(\tbx)$ for $\tbx\in\R^{n-1}$, where $\tbd\in \R^{n-1}$ is a unit vector and $0\neq \psi\in \scrD(\Gamma)$ is independent of $k$.  %
Then $\widehat{\phi}(\bxi) = \widehat{\psi}(\boldsymbol{\eta})$, where $\boldsymbol{\eta}=\bxi-k\tbd$. Thus, for any $\eta_*\geq 1$, where $I(\eta_*) := \int_{|\boldsymbol{\eta}|\leq \eta_*} |\widehat{\psi}(\boldsymbol{\eta})|^2\,\rd \boldsymbol{\eta}$, substituting $\bxi = \boldsymbol{\eta}+k\tbd$,
the first inequality in \rf{aCoercProof} gives that
\begin{align}
\label{CRem1}
|(S_k\phi,\phi)_{L^2(\Gamma)}| \geq %
\frac{1}{2\sqrt{2} } \int_{\R^{n-1}} \frac{|\widehat{\psi}(\boldsymbol{\eta})|^2}{\sqrt{|k^2-|\bxi|^2|}} \,\rd \boldsymbol{\eta}
\geq \frac{I(\eta_*)}{2\sqrt{6}\eta_*k^{1/2}},
\end{align}
since $|k^2-|\bxi|^2| \leq 2k |\boldsymbol{\eta}| + |\boldsymbol{\eta}|^2  \leq3k\eta_*^2$,
for $|\boldsymbol{\eta}|\leq \eta_*$. %
Also, for the same choice of $\phi$ and all $t\geq0$,
\begin{align}
\label{CRem2}
 \norm{\phi}{\tilde{H}^{-t}_k(\Gamma)}^2 \leq \frac{1}{k^{2t}} \int_{\R^{n-1}} |\widehat{\phi}(\bxi)|^2 \,\rd \bxi
= \frac{1}{k^{2t}} \int_{\R^{n-1}} |\widehat{\psi}(\boldsymbol{\eta})|^2\,\rd \boldsymbol{\eta}
\leq  2k^{-2t} I(\eta_*),
\end{align}
for $\eta_*$ sufficiently large. Further, for $\eta_*$ sufficiently large and $k>1$,
$$
I(\eta_*) \geq  \int_{|\boldsymbol{\eta}|> \eta_*} (2+2|\boldsymbol{\eta}|+|\boldsymbol{\eta}|^2)^t|\widehat{\psi}(\boldsymbol{\eta})|^2\,\rd \boldsymbol{\eta}
\geq  \int_{|\boldsymbol{\eta}|> \eta_*} \left(1+\left|\frac{\boldsymbol{\eta}}{k}+\tbd\right|^2\right)^t|\widehat{\psi}(\boldsymbol{\eta})|^2\,\rd \boldsymbol{\eta},
$$
so that, since $k^2+|\boldsymbol{\eta}+k\tbd|^2 \leq k^2+(\eta_*+k)^2 \leq 5k^2\eta_*^2$ for $|\boldsymbol{\eta}| \leq \eta_*$,
\begin{align} \label{CRem3}
\norm{\phi}{\tilde{H}^{t}_k(\Gamma)}^2 = \int_{\R^{n-1}} \left(k^2+\left|\boldsymbol{\eta}+k\tbd\right|^2\right)^t|\widehat{\psi}(\boldsymbol{\eta})|^2\,\rd \boldsymbol{\eta} \leq 6k^{2t}\eta_*^{2t} I(\eta_*).
\end{align}
Combining \rf{CRem1}, \rf{CRem2} and \rf{CRem3} we see that, for every $s\in \R$, if $\eta_*\geq 1$ is sufficiently large, there exists $C>0$, depending on $\eta_*$ and $s$ but independent of $k$, such that
$$
|(S_k\phi,\phi)_{L^2(\Gamma)}| \geq C k^{1/2} \norm{\phi}{\tilde{H}^{-(s+1)}_k(\Gamma)} \, \norm{\phi}{\tilde{H}^{s}_k(\Gamma)}.
$$
But, on the other hand,
$$
|(S_k\phi,\phi)_{L^2(\Gamma)}| = |\langle S_k\phi,\phi \rangle_{H^{s+1}(\Gamma)\times \tilde{H}^{-(s+1)}(\Gamma)}|\leq  \norm{S_k\phi}{{H}^{s+1}_k(\Gamma)} \, \norm{\phi}{\tilde{H}^{-(s+1)}_k(\Gamma)},
$$
so that, for this particular choice of $\phi$,%
\begin{align*}
\label{}
 \norm{S_k\phi}{H^{s+1}_k(\Gamma)}  \geq C k^{1/2} \norm{\phi}{\tilde{H}^{s}_k(\Gamma)},
\end{align*}
which demonstrates the sharpness of \rf{SEst} in the limit $k\to\infty$.
\end{rem}

\begin{rem} Theorem \ref{ThmSNormSmooth} bounds $S_k:\tilde H^{s}(\Gamma)\to H^{s+1}(\Gamma)$. We can also bound $S_k$ as a mapping $S_k:\tilde H^{s}(\Gamma)\to H^s(\Gamma)$. Since $\norm{\phi}{\tilde H^{s-1}_k(\Gamma)} \leq k^{-1} \norm{\phi}{\tilde H^{s}_k(\Gamma)}$ for $\phi\in \tilde H^s(\Gamma)$, it follows from Theorem \ref{ThmSNormSmooth} that, for $kL\geq 1$,
\begin{align} \label{eq:sequal}
\norm{S_k\phi}{H^{s}_k(\Gamma)} \leq C\left(\frac{L}{k}\right)^{1/2} \norm{\phi}{\tilde H^{s}_k(\Gamma)}, \qquad \mbox{for } \phi\in \tilde H^s(\Gamma).
\end{align}
Arguing as in Remark \ref{rem:sharpness1} above, and with the same choice of $\phi$ and again with $L=\diam(\Gamma)=1$ and assuming $k>1$, we easily see that for every $s\in \R$ there exists $C>0$ such that
$$
\norm{S_k\phi}{{H}^{s}_k(\Gamma)} \, \norm{\phi}{\tilde{H}^{-s}_k(\Gamma)} \geq|(S_k\phi,\phi)_{L^2(\Gamma)}| \geq C k^{-1/2} \norm{\phi}{\tilde{H}^{-s}_k(\Gamma)} \, \norm{\phi}{\tilde{H}^{s}_k(\Gamma)},
$$
so that
\begin{align}
\label{eq:sequal2}
 \norm{S_k\phi}{H^{s}_k(\Gamma)}  \geq C k^{-1/2} \norm{\phi}{\tilde{H}^{s}_k(\Gamma)},
\end{align}
which demonstrates the sharpness of \rf{eq:sequal} in the limit $k\to\infty$.

We note that in the case $s=0$, when $H^s(\Gamma)=\tilde H^{-s}(\Gamma)=L^2(\Gamma)$, \rf{eq:sequal} and  \rf{eq:sequal2} provide upper and lower bounds on the norm of $S_k$ as an operator on $L^2(\Gamma)$, these bounds shown previously in the 2D case in \cite{CWGLL09} and, in the multidimensional case, very recently in \cite{HanTacyGalkowski14}.
\end{rem}

\begin{rem}
\label{SRemcoer}
We can also show that the bound in Theorem \ref{ThmCoerc} is sharp in its dependence on $k$ as $k\to\infty$.  Let $0\neq \phi\in \scrD(\Gamma)$ be independent of $k$. Then, by \rf{SIPFourierRep}, and since $\widehat{\phi}(\bxi)$ is rapidly decreasing as $k\to\infty$,
\begin{align}
|a_\sD(\phi,\phi)| \leq \frac{1}{2} \int_{\R^{n-1}}\frac{|\widehat{\phi}(\bxi)|^2}{\sqrt{|k^2-|\bxi|^2|}} \,\rd \bxi
& \sim  \frac{1}{2k} \int_{\R^{n-1}}\,|\widehat{\phi}(\bxi)|^2 \,\rd \bxi,
\end{align}
as $k\to\infty$. Further,
\begin{align}
 \norm{\phi}{\tilde{H}^{-1/2}_k(\Gamma)}^2
\sim  \frac{1}{k} \int_{\R^{n-1}} |\widehat{\phi}(\bxi)|^2\,\rd \bxi
\end{align}
as $k\to\infty$. Thus, for every $C>1/2$,
\begin{align}
|a_\sD(\phi,\phi)| \leq C
 \norm{\phi}{\tilde{H}^{-1/2}_k(\Gamma)}^2,
\end{align}
for all sufficiently large $k$.
\end{rem}

\section{$k$-explicit analysis of $T_k$}
\label{sec:TkAnalysis}

\begin{proof}[Proof of Theorem \ref{ThmTNormSmooth}]
By the density of $\scrD(\Gamma)$ in $\tilde{H}^s(\Gamma)$ it suffices to prove \rf{TEst} for $\phi\in \scrD(\Gamma)$. For such a $\phi$ we first note from Theorem \ref{ThmFourierRep} that $T_k\phi = (T_k^\infty \phi)|_\Gamma$, where
$\widehat{T_k^\infty\varphi}(\bxi) = (\ri/2) Z(\bxi) \hat{\varphi}(\bxi)$, for $\varphi\in \scrD(\R^{n-1})$.
Clearly, for any $\varphi\in \scrD(\R^{n-1})$ and any $s\in \R$, the integral
\begin{align*}
\label{}
\int_{\R^{n-1}}(k^2+|\bxi|^2)^{s-1} |\widehat{T_k^\infty \varphi}(\bxi)|^2 \, \rd \bxi
\end{align*}
is finite, and hence $T_k^\infty\varphi\in H^{s-1}(\R^{n-1})$. As a result, given $\phi\in \scrD(\Gamma)$ we can estimate
\begin{align}
\label{}
\norm{T_k \phi}{H^{s-1}_k(\Gamma)} \leq \norm{T_k^\infty \phi}{H^{s-1}_k(\R^{n-1})}
& = \frac{1}{2}\sqrt{\int_{\R^{n-1}}(k^2+|\bxi|^2)^{s-1} |Z(\bxi)|^2|\widehat{\phi}(\bxi)|^2 \, \rd \bxi}
\notag\\ &
 \leq \frac{1}{2}\sqrt{\int_{\R^{n-1}}(k^2+|\bxi|^2)^{s}| \widehat{\phi}(\bxi)|^2 \, \rd \bxi},%
\end{align}
as required.
\end{proof}

\begin{proof}[Proof of Theorem \ref{ThmTCoercive}]
We assume throughout that $L=\diam \Gamma = 1$, noting that a simple rescaling deals with the general case.
By the density of $\scrD(\Gamma)$ in $\tilde{H}^{1/2}(\Gamma)$ it suffices to prove \rf{bEst} for $\phi\in \scrD(\Gamma)$. For such a $\phi$, equation \rf{TIPFourierRep} from Theorem \ref{ThmFourierRep} gives that
\begin{align}
\label{bI}
|a_\sN(\phi,\phi)| = \frac{1}{2}\left|\int_{\R^{n-1}} Z(\bxi) |\widehat{\phi}(\bxi)|^2 \,\rd \bxi\right| \geq \frac{I}{2\sqrt{2}},
\end{align}
where
\begin{align*}
I:=\int_{\R^{n-1}} |Z(\bxi)| |\widehat{\phi}(\bxi)|^2 \,\rd \bxi.
\end{align*}
Defining
\begin{align*}
\label{}
J:=\norm{\phi}{\tilde{H}^{1/2}_k(\Gamma)}^2 =\int_{\R^{n-1}}(k^2+|\bxi|^2)^{1/2}|\widehat{\phi}(\bxi)|^2\,\rd \bxi,
\end{align*}
the problem of proving \rf{bEst} reduces to that of proving
\begin{align}
\label{JIIneq}
I \geq Ck^\beta J, \qquad k\geq k_0,
\end{align}
for some $C>0$ depending only on $k_0$. %

 The difficulty in proving \rf{JIIneq} is that the factor $|Z(\bxi)|$ in $I$ vanishes when $|\bxi|=k$. To deal with this, we write the integrals $I$ and $J$ as
\begin{align*}
\label{}
I&= I_1+I_2+I_3+I_4, \qquad\qquad
J= J_1+J_2+J_3+J_4,
\end{align*}
corresponding to the decomposition
\begin{align*}
\label{}
\int_{\R^{n-1}} = \int_{0<|\bxi|< k-\eps} + \int_{k-\eps<|\bxi|< k} + \int_{k<|\bxi|<k+\eps} + \int_{|\bxi|>k+\eps},
\end{align*}
where $0<\eps\leq k$ is to be specified later. We then proceed to estimate the integrals $J_1,...,J_4$ separately. Throughout the remainder of the proof $c>0$ denotes an absolute constant whose value may change from occurrence to occurrence.

We first observe that, for $0<|\bxi|<k-\eps$,
\begin{align*}
\label{}
\frac{k^2+|\bxi|^2}{k^2-|\bxi|^2}
\leq\frac{k^2+(k-\eps)^2}{k^2-(k-\eps)^2}
\leq \frac{2k^2}{\eps(2k-\epsilon)} \leq \frac{2k}{\epsilon},
\end{align*}
so that
\begin{align*}
J_1 :&= \int_{0<|\bxi|<k-\eps} (k^2+|\bxi|^2)^{1/2}|\widehat{\phi}(\bxi)|^2\,\rd \bxi \\
& = \int_{0<|\bxi|<k-\eps} \frac{(k^2+|\bxi|^2)^{1/2}}{ (k^2-|\bxi|^2)^{1/2}} |Z(\bxi)|\,|\widehat{\phi}(\bxi)|^2\,\rd \bxi \\
&\leq c \sqrt{\frac{k}{\eps}}\, I_1.
\end{align*}
Similarly, for $|\bxi|>k+\eps$,
\begin{align*}
\label{}
\frac{k^2+|\bxi|^2}{|\bxi|^2-k^2}
\leq \frac{k^2+(k+\eps)^2}{(k+\eps)^2-k^2} \leq \frac{5k}{2\eps},
\end{align*}
so that
\begin{align*}
J_4 :&= \int_{|\bxi|>k+\eps}(k^2+|\bxi|^2)^{1/2}|\widehat{\phi}(\bxi)|^2\,\rd \bxi\\
&
= \int_{|\bxi|>k+\eps} \frac{(k^2+|\bxi|^2)^{1/2}}{ (|\bxi|^2-k^2)^{1/2}} |Z(\bxi)|\,|\widehat{\phi}(\bxi)|^2\,\rd \bxi \\
&\leq c \sqrt{\frac{k}{\eps}} \,I_4.
\end{align*}

To estimate $J_2$ and $J_3$, we first derive a pointwise estimate on the Fourier transform of $\phi$. To do this, we note first that, for $t\in \R$, $|\re^{\ri t}-1|^2 = 4\sin^2{(t/2)}\leq t^2$, so that, for $\bxi_1,\bxi_2\in \R^{n-1}$ and recalling our assumption that $\diam \Gamma = 1$,
\begin{align*}
\label{}
|\widehat{\phi}(\bxi_1)-\widehat{\phi}(\bxi_2)| &\leq \frac{1}{(2\pi)^{\frac{n-1}{2}}}\left|\int_{ \Gamma}\re^{-\ri \bxi_2\cdot \bx }\left(\re^{-\ri (\bxi_1-\bxi_2)\cdot \bx  }- 1 \right)\phi(\bx )\,\rd \bx  \right|\nonumber \\
&\leq \frac{|\bxi_1-\bxi_2|}{(2\pi)^{\frac{n-1}{2}}}\int_{ \Gamma}|\bx ||\phi(\bx )|\,\rd \bx  \nonumber \\
&\leq \frac{|\bxi_1-\bxi_2|}{(2\pi)^{\frac{n-1}{2}}}\left(\int_{ \Gamma}|\bx |^2\,\rd \bx \right)^{1/2} \left(\int_{ \Gamma}|\phi(\bx )|^2\,\rd \bx \right)^{1/2}.  \nonumber \\
&=c |\bxi_1-\bxi_2|\left(\int_{\R^{n-1}}|\widehat{\phi}(\bxi)|^2\,\rd \bxi\right)^{1/2},  \nonumber \\
&\leq c|\bxi_1-\bxi_2|k^{-1/2}J.
\end{align*}
As a result, we can estimate, with $\hat{\bxi}:=\bxi/|\bxi|$,
\begin{align}
\label{FTPointwise}
|\widehat{\phi}(\bxi)|^2 \leq 2\left (|\widehat{\phi}(\bxi\pm\eps \hat{\bxi})|^2 + |\widehat{\phi}(\bxi\pm\eps \hat{\bxi}) -
\widehat{\phi}(\bxi)|^2\right) \leq 2\left (|\widehat{\phi}(\bxi\pm\eps \hat{\bxi})|^2 + \frac{c\eps^2  J}{k}\right), %
\end{align}
which %
then implies that
\begin{align}
J_2:&= \int_{k-\eps<|\bxi|<k} (k^2+|\bxi|^2)^{1/2}|\widehat{\phi}(\bxi)|^2\,\rd \bxi %
\notag \\
&
 \leq 2\sqrt{2}k\left(\int_{k-\eps<|\bxi|<k} |\widehat{\phi}(\bxi-\eps \hat{\bxi})|^2 \,\rd \bxi + c\eps^3 k^{n-3} J \right).
 \label{J2Est}
\end{align}
We now note that, for $0<\eps<c<d$,
\begin{align}
\label{fxipluseps}
\int_{c<|\bxi|<d} f(\bxi \pm \eps \hat{\bxi}) \,\rd \bxi = \begin{cases}
\int_{c\pm \eps<|\bxi|<d\pm \eps} f(\bxi)\left(1\mp \frac{\eps}{|\bxi|}\right) \,\rd \bxi,& n=3,\\
\int_{c\pm \eps<|\bxi|<d\pm \eps} f(\bxi) \,\rd \bxi,& n=2.
\end{cases}
\end{align}
Assume that $0<\eps<k/3$. Then for $k-2\eps<|\bxi|<k-\eps$, we have $1+\eps/|\bxi|\leq 2$, so that, using \rf{fxipluseps},
\begin{align*}
\label{}
\int_{k-\eps<|\bxi|<k} |\widehat{\phi}(\bxi-\eps \hat{\bxi})|^2 \,\rd \bxi &\leq 2 \int_{k-2\eps<|\bxi|<k-\eps} |\widehat{\phi}(\bxi )|^2 \,\rd \bxi\nonumber\\
&\leq \frac{2}{(k^2-(k-\eps)^2)^{1/2}}\int_{k-2\eps<|\bxi|<k-\eps} (k^2-|\bxi|^2)^{1/2}|\widehat{\phi}(\bxi )|^2 \,\rd \bxi\nonumber\\
&\leq \frac{2}{(\eps k)^{1/2}}I_1.
\end{align*}
Inserting this estimate into \rf{J2Est}, we find that
\begin{align*}
J_2 \leq c \sqrt{\frac{k}{\eps}} \,I_1 + c\eps^3 k^{n-2} J.
\end{align*}
Arguing similarly, again assuming that $0<\eps<k/3$, but using \rf{FTPointwise} and \rf{fxipluseps} with the plus rather than the minus sign, gives
\begin{align*}
\label{}
J_3 :&= \int_{k<|\bxi|<k+\eps} (k^2+|\bxi|^2)^{1/2}|\widehat{\phi}(\bxi)|^2\,\rd \bxi\leq c\sqrt{\frac{k}{\eps}}\,I_4 + c\eps^3 k^{n-2} J.
\end{align*}

Combining the above estimates we see that, for $0<\eps<k/3$,%
\begin{align*}
J = J_1+J_2+J_3+J_4 & \leq c\sqrt{\frac{k}{\eps}} \left(I_1 + I_4\right) +  c\eps^3 k^{n-2} J,
\end{align*}
which implies that
\begin{align}
\label{JEst2}
J\left(1-c\eps^3k^{n-2}\right) \leq c\sqrt{\frac{k}{\eps}}\,I.
\end{align}
Now, given $k_0>0$, choose $\tilde c>0$ such that $\tilde c< 1/(2c)$ and $\tilde c k^{-(n-2)/3} \leq k/3$, for $k\geq k_0$. Then, for $k\geq k_0$, setting $\eps = \tilde c k^{-(n-2)/3}$ in \rf{JEst2}, it follows from \rf{JEst2} that
$$
J \leq c \,\tilde c^{-1/2} \,k^{-\beta} I,
$$
where $\beta = -1/2 - (n-2)/6$.
Thus \rf{JIIneq} holds, which completes the proof.
\end{proof}

\begin{rem}
\label{TRem}
We can show that the bounds established in Theorem \ref{ThmTNormSmooth} are sharp in their dependence on $k$ as $k\to\infty$.  Let $0\neq \phi\in \scrD(\Gamma)$ be independent of $k$. Then, by \rf{bI}, and since $a_\sN(\phi,\phi) = (T_k\phi,\phi)_{L^2(\Gamma)}$ and $\widehat{\phi}(\bxi)$ is rapidly decreasing,%
\begin{align}
\label{bEstRem}
|(T_k\phi,\phi)_{L^2(\Gamma)}| \geq \frac{1}{2\sqrt{2}} \int_{\R^{n-1}}\sqrt{|k^2-|\bxi|^2|}|\widehat{\phi}(\bxi)|^2 \,\rd \bxi
& \sim  \frac{k}{2\sqrt{2}} \int_{\R^{n-1}}\,|\widehat{\phi}(\bxi)|^2 \,\rd \bxi,
\end{align}
as $k\to\infty$. Also, for every $s\in\R$, $|(T_k\phi,\phi)_{L^2(\Gamma)}| \leq  \norm{T_k\phi}{H^{s-1}_k(\Gamma)} \norm{\phi}{\tilde{H}^{1-s}_k(\Gamma)}$ and
\begin{align}
\label{bEstRem2}
 \norm{\phi}{\tilde{H}^{s}_k(\Gamma)}^2 %
= \int_{\R^{n-1}} (k^2+|\bxi|^2)^s |\widehat{\phi}(\bxi)|^2\,\rd \bxi \sim  k^{2s} \int_{\R^{n-1}} |\widehat{\phi}(\bxi)|^2\,\rd \bxi,
\end{align}
as $k\to\infty$.
Combining \rf{bEstRem} and \rf{bEstRem2} we see that, for every $s\in\R$ and $C<1/(2\sqrt{2})$,  it holds for all sufficiently large $k$ that $|(T_k\phi,\phi)_{L^2(\Gamma)}| \geq C\norm{\phi}{\tilde{H}^{1-s}_k(\Gamma)} \norm{\phi}{\tilde{H}^{s}_k(\Gamma)}$,  so that
\begin{align*}
\label{}
 \norm{T_k\phi}{H^{s-1}_k(\Gamma)}  \geq  C \norm{\phi}{\tilde{H}^{s}_k(\Gamma)},
\end{align*}
for all $k$ sufficiently large, which demonstrates the sharpness of \rf{TEst} in the limit $k\to\infty$.
\end{rem}

\begin{rem}\label{rem:sharpness7} %
We can also show that the bound established in Theorem \ref{ThmTCoercive} is sharp in its dependence on $k$ as $k\to\infty$, in the case $n=2$.  As in Remark \ref{rem:sharpness1}, let $\phi(\tbx):=\re^{\ri k\tbd\cdot \tbx}\psi(\tbx)$, where $\tbd\in \R^{n-1}$ is a unit vector and $0\neq \psi\in \scrD(\Gamma)$ is independent of $k$, so that
$\widehat{\phi}(\bxi) = \widehat{\psi}(\boldsymbol{\eta})$, where $\boldsymbol{\eta}=\bxi-k\tbd$. Since $|k^2-|\bxi|^2| \leq 2k |\boldsymbol{\eta}| + |\boldsymbol{\eta}|^2$, and since $\widehat{\phi}(\bxi)$ is rapidly decreasing,
\begin{align}
\label{CRem10}
|a_\sN(\phi,\phi)| \leq
\frac{1}{2 } \int_{\R^{n-1}} \sqrt{2k |\boldsymbol{\eta}| + |\boldsymbol{\eta}|^2}\,|\widehat{\psi}(\boldsymbol{\eta})|^2 \,\rd \boldsymbol{\eta} \sim
\frac{k^{1/2}}{\sqrt{2} } \int_{\R^{n-1}} |\boldsymbol{\eta}|^{1/2}\,|\widehat{\psi}(\boldsymbol{\eta})|^2 \,\rd \boldsymbol{\eta},
\end{align}
as $k\to\infty$. Further,
\begin{align}
\label{CRem20}
 \norm{\phi}{\tilde{H}^{1/2}_k(\Gamma)}^2 = \int_{\R^{n-1}} \left(2k^2 +2k\boldsymbol{\eta}\cdot\tbd+|\boldsymbol{\eta}|^2\right)^{1/2}\,|\widehat{\psi}(\boldsymbol{\eta})|^2 \,\rd \boldsymbol{\eta} \sim \sqrt{2}\, k \int_{\R^{n-1}} |\widehat{\psi}(\boldsymbol{\eta})|^2 \,\rd \boldsymbol{\eta},
\end{align}
as $k\to\infty$.
Combining \rf{CRem10} and \rf{CRem20} we see that, for some constant $C>0$ independent of $k$,
$$
|a_\sN(\phi,\phi)| \leq Ck^{-1/2}  \norm{\phi}{\tilde{H}^{1/2}_k(\Gamma)}^2,
$$
for all sufficiently large $k$.
This demonstrates the sharpness of \rf{bEst} in the limit $k\to\infty$, for the case $n=2$. In the case $n=3$ it may be that \rf{bEst} holds with the value of $\beta$ increased from $-2/3$ to $-1/2$, i.e., to its value for $n=2$.
\end{rem}

\section{Norm estimates in $H^{1/2}(\Gamma)$}
\label{sec:NormEstimates}
In this section we derive $k$-explicit estimates of the norms of certain functions in $H^{1/2}(\Gamma)$, which are of relevance to the numerical solution of the Dirichlet boundary value problem $\sD$, when it is solved via the integral equation formulation \rf{BIE_sl}. For an application of the results presented here see \cite{ScreenBEM}.

The motivation for the estimates we prove in Lemma \ref{lem:H_one_half_norms} below comes from the need to estimate integrals (strictly speaking, duality pairings) of the form
\begin{align}
\label{IDef}
I:=\int_\Gamma w(\by) v(\by)\,\rd s(\by)
:=  \langle w,\overline{v} \rangle_{H^{1/2}(\Gamma)\times \tilde{H}^{-1/2}(\Gamma)},
\end{align}
where $w\in H^{1/2}(\Gamma)$ and $v\in \tilde{H}^{-1/2}(\Gamma)$.

One situation in which such integrals arise is when solving \rf{BIE_sl} using a Galerkin BEM. In order to derive error estimates for the resulting solution in the domain $D$, and for the far-field pattern (defined e.g.\ as in \cite[Eqn (52)]{ScreenBEM}), we need to estimate duality pairings of the form \rf{IDef} where $v\in \tilde{H}^{-1/2}(\Gamma)$ represents the error in our Galerkin solution, and $w\in H^{1/2}(\Gamma)$ is a known function, possibly depending on a parameter. For the far-field pattern, $w(\by)= \re^{\ri k\hat{\bx}\cdot \by}$ for some observation direction $\hat{\bx}\in\R^n$ with $|\hat{\bx}|= 1$, and, for the solution evaluated at $\bx\in D$, $w(\by)= \Phi(\bx,\by)$.

One also encounters integrals of the form \rf{IDef} when attempting to estimate the magnitude of the solution of the continuous problem at a point $\bx$ in the domain, using a bound on the boundary data (an example is Corollary \ref{cor:BoundOnSolnInDomain} below, which is applied in \cite{ScreenBEM}). In this case $w(\by)=\Phi(\bx,\by)$ and $v=[\pdonetext{u}{\bn}]$ is the exact solution of \rf{BIE_sl}.

Given an estimate of $\normt{v}{\tilde{H}^{-1/2}_k(\Gamma)}$, an estimate of $|I|$ follows from
\begin{align*}
\label{}
|I| =| \langle w,\overline{v} \rangle_{H^{1/2}(\Gamma)\times \tilde{H}^{-1/2}(\Gamma)}| \leq  \normt{w}{H^{1/2}_k(\Gamma)}\normt{v}{\tilde{H}^{-1/2}_k(\Gamma)},
\end{align*}
provided we can bound $\normt{w}{H^{1/2}_k(\Gamma)}$. We now do this for the choices of $w$ noted above.

\begin{lem}
\label{lem:H_one_half_norms}
Let $k>0$, let $\Gamma$ be an arbitrary nonempty relatively open subset of $\Gamma_\infty$, and let $L:=\diam{\Gamma}$.
\begin{enumerate}[(i)]
\item Let $\bd\in\R^n$ with $|\bd|\leq 1$. Then, for $s\geq 0$, there exists $C_s>0$, dependent only on $s$, such that
\begin{align}
 \label{eqn:H_one_half_planewave}
 \normt{\re^{\ri k \bd\cdot (\cdot)}}{H^{s}_k(\Gamma)} \leq C_s L^{(n-1-2s)/2}(1+kL)^s.
 \end{align}
\item Let $\bx\in D:=\R^n\setminus\overline{\Gamma}$. Then there exists $C>0$, independent of $k$, $\Gamma$, and $\bx$, such that
 \begin{align}
 \label{eqn:H_one_half_Phi}
 \norm{\Phi(\bx,\cdot)}{H^{1/2}_k(\Gamma)} \leq
\begin{cases}
 \displaystyle{C\sqrt{k}\left(\frac{1}{kd}+P_3(L/d)\right)}, & n=3,\\[4mm]
  \displaystyle{C\left(\frac{1}{\sqrt{kd}}\left(\frac{1}{\sqrt{kL}} + \log\left(2+\frac{1}{kd}\right)\right) +\,P_2(L/d)\right)}, & n=2,
\end{cases}
 \end{align}
 where $d:=\dist(\bx,\Gamma)$ and $P_n(t) := \min(t^{(n-1)/2}, \log^{1/2}(2+t))$.
\end{enumerate}
\end{lem}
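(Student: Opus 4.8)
The plan is to exhibit a good extension and take the infimum defining $\|\cdot\|_{H^s_k(\Gamma)}$. Writing $\bd=(\tbd,d_n)$ with $\tbd\in\R^{n-1}$ and $|\tbd|\le|\bd|\le1$, the function $\re^{\ri k\bd\cdot(\cdot)}$ on $\Gamma$ corresponds to $\re^{\ri k\tbd\cdot(\cdot)}$ on $\tGamma$. Fix $\tbx_0\in\overline{\tGamma}$, so $\tGamma\subset\overline{B_L(\tbx_0)}$, and choose $\chi\in\scrD(\R^{n-1})$ with $\chi\equiv1$ on $B_L(\tbx_0)$, $\supp\chi\subset B_{2L}(\tbx_0)$, and $\|\partial^\alpha\chi\|_{L^\infty}\le C_\alpha L^{-|\alpha|}$; then $W:=\chi\,\re^{\ri k\tbd\cdot(\cdot)}\in\scrD(\R^{n-1})$ is a valid extension. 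Since a derivative falling on the exponential costs a factor $\le k$ and one falling on $\chi$ a factor $\le C/L$, the Leibniz rule gives $\|\partial^\alpha W\|_{L^2(\R^{n-1})}\le C_\alpha L^{(n-1)/2}(k+L^{-1})^{|\alpha|}$. For integer $m\ge0$ this yields $\|W\|_{H^m_k(\R^{n-1})}^2\le C_m L^{n-1}\sum_{j=0}^m k^{2(m-j)}(k+L^{-1})^{2j}\le C_m L^{n-1}(k+L^{-1})^{2m}$, i.e.\ $\|W\|_{H^m_k(\R^{n-1})}\le C_m L^{(n-1-2m)/2}(1+kL)^m$; the case of non-integer $s\ge0$ follows by Hölder's inequality on the Fourier side, interpolating between the orders $\lfloor s\rfloor$ and $\lceil s\rceil$. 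Taking the infimum over extensions gives \eqref{eqn:H_one_half_planewave}.

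\textbf{Part (ii).} The plan is the same: exhibit an extension $G$ of $\Phi(\bx,\cdot)|_{\tGamma}$ and estimate $\|G\|_{H^{1/2}_k(\R^{n-1})}$. Writing $\bx=(\tbx,x_n)$ and using, as in the proof of Theorem~\ref{ThmFourierRep}, that $\Phi(\bx,\by)=\Phi_c(\tbx-\tby,x_n)$ for $\by=(\tby,0)$, I would take $G(\tby):=\chi(\tby)\,\Phi_c(\tbx-\tby,x_n)$, where $\chi\in\scrD(\R^{n-1})$ satisfies $\chi\equiv1$ on $\tGamma$, $\supp\chi\subset N_\rho:=\{\tby:\dist(\tby,\tGamma)<\rho\}$ with $\rho:=\min(L,d/2)$, and $\|\partial^\alpha\chi\|_{L^\infty}\le C_\alpha\rho^{-|\alpha|}$. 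Two elementary geometric facts will be used throughout: $N_\rho\subset B_{2L}(\tbx_0)$ for $\tbx_0\in\overline{\tGamma}$, so $|\supp\chi|\le CL^{n-1}$; and, writing $r:=|(\tbx-\tby,x_n)|=|\bx-\by|$, one has $c_1d\le r\le C(d+L)$ for $\tby\in\supp\chi$, with $c_1>0$ absolute (the lower bound follows by treating $|x_n|\gtrsim d$ and $|x_n|\ll d$ separately, using $\dist(\tbx,\tGamma)=\sqrt{d^2-x_n^2}$; taking $\rho\le d/2$ guarantees that $\supp\chi$ reaches the vertical line through $\tbx$ only when $|x_n|\gtrsim d$). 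In particular the singularity of $\Phi_c(\cdot,x_n)$ — present only when $x_n=0$ — is excluded, so $G\in\scrD(\R^{n-1})$ and $G|_{\tGamma}=\Phi(\bx,\cdot)|_{\tGamma}$.

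To bound $\|G\|_{H^{1/2}_k(\R^{n-1})}$ I would use the interpolation inequality $\|G\|_{H^{1/2}_k}^2\le\|G\|_{L^2(\R^{n-1})}\,\|G\|_{H^1_k(\R^{n-1})}$, reducing matters to $\|G\|_{L^2}$ and $\|\nabla G\|_{L^2}$ (using $\|G\|_{H^1_k}\le k\|G\|_{L^2}+\|\nabla G\|_{L^2}$ and $\nabla G=(\nabla\chi)\Phi_c(\tbx-\cdot,x_n)-\chi\,(\nabla\Phi_c)(\tbx-\cdot,x_n)$, with $|\nabla\chi|\lesssim\rho^{-1}$). Each of $\int_{\supp\chi}|\Phi_c(\tbx-\tby,x_n)|^2\,\rd\tby$ and $\int_{\supp\chi}|(\nabla\Phi_c)(\tbx-\tby,x_n)|^2\,\rd\tby$ would then be estimated by decomposing $\supp\chi$ into dyadic shells $S_j:=\{\tby\in\supp\chi:c_1 2^{j}d\le r<c_1 2^{j+1}d\}$, $0\le j\le J$ with $J\lesssim\log_2(2+L/d)$, on which the integrands are controlled by pointwise estimates on $\Phi_c$ and its derivatives (for $n=3$, $|\Phi_c|\lesssim r^{-1}$ and $|\nabla\Phi_c|\lesssim r^{-1}(k+r^{-1})$; for $n=2$, via \eqref{H0Est}--\eqref{BnEst} and \eqref{H1Est} applied to $H_0^{(1)}$ and $H_1^{(1)}$, after a further split at $r\sim1/k$), while $|S_j|\lesssim\min\{(2^{j}d)^{n-1},L^{n-1}\}$. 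Summing the resulting (essentially geometric) series, and keeping alongside it the cruder estimate $(\sup_{\supp\chi}|\cdot|)\cdot|\supp\chi|^{1/2}$, produces bounds whose minimum is precisely $P_n(L/d)$ — the dyadic sum yielding the $\log^{1/2}$ branch, the crude estimate the power branch — while the innermost shell $S_0$ (where $|\Phi_c|\sim1/d$ and $|\nabla\Phi_c|\sim k/d$ for $n=3$) together with the term $(\nabla\chi)\Phi_c$ produces the $1/(kd)$-type contributions, and in the case $n=2$ the factors $\log(2+(kd)^{-1})$ and $(kL)^{-1/2}$ appearing in \eqref{eqn:H_one_half_Phi}. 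Collecting all contributions gives \eqref{eqn:H_one_half_Phi}.

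The hard part will be the estimation of $\|G\|_{L^2}$ and $\|\nabla G\|_{L^2}$ in the regime $d\ll L$: the naive bound $\|\Phi_c\|_{L^\infty(\supp\chi)}\cdot|\supp\chi|^{1/2}$ overshoots by a factor of order $L/d$, so one must exploit the decay of $\Phi_c$ (and of $\nabla\Phi_c$) away from $\bx$ through the dyadic decomposition — this is exactly what converts a spurious power of $L/d$ into the logarithm hidden in $P_n(L/d)$. Two further technical points require care: the screen $\Gamma$ is arbitrary (possibly non-Lipschitz), so $N_\rho$ may be long and thin, but the containment $N_\rho\subset B_{2L}(\tbx_0)$ keeps $|\supp\chi|\lesssim L^{n-1}$ and ensures that each dyadic shell of radius $\sim r$ intersected with $\supp\chi$ still has measure $\lesssim r^{n-1}$; and the cases $x_n=0$ and $x_n\ne0$ must be treated uniformly, which is the reason for taking $\rho$ no larger than $d/2$. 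For $n=2$, the Hankel-function bookkeeping — distinguishing the small-argument (logarithmic) and large-argument ($(kr)^{-1/2}$-decay) regimes shell by shell — is an additional, though routine, complication.
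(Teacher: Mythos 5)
Part (i) of your proposal is correct: it uses the same extension as the paper (a cutoff at scale $L$ times the plane wave), the only difference being that you estimate the $H^m_k$-norm in physical space via Leibniz and then interpolate, where the paper computes the Fourier transform of the extension explicitly; both routes give \eqref{eqn:H_one_half_planewave}. For part (ii) your overall strategy also matches the paper's (extend by a cutoff, reduce the $H^{1/2}_k$-norm to $L^2$- and $H^1_k$-norms, then use pointwise bounds on $\Phi_c$ and $\nabla\Phi_c$ together with radial/dyadic integration to produce the $P_n(L/d)$ and $1/(kd)$ terms); your interpolation inequality $\|G\|_{H^{1/2}_k}^2\leq\|G\|_{L^2}\|G\|_{H^1_k}$ is a harmless sharpening of the paper's cruder reduction $\|u\|_{H^{1/2}_k(\Gamma)}\leq k^{-1/2}\|u\|_{H^1_k(\Gamma)}$.

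However, there is a genuine gap in part (ii): your choice of cutoff. By insisting that $\supp\chi$ lie in the thin neighbourhood $N_\rho$ of $\tGamma$ with $\rho=\min(L,d/2)$, you force $\chi$ to drop from $1$ to $0$ across a shell of width at most $d/2$ around \emph{every} component of $\tGamma$, so $|\nabla\chi|\sim 1/d$ on a set which, for arbitrary relatively open $\Gamma$ (say a union of $\sim(L/d)^{n-1}$ small components spread through a ball of radius $L$), has measure $\sim L^{n-1}$ and meets every dyadic shell $S_j$ in measure $\sim(2^jd)^{n-1}$. Your own dyadic estimate then gives only $\|(\nabla\chi)\,\Phi_c(\tbx-\cdot,x_n)\|_{L^2}\lesssim d^{-1}P_n(L/d)$ (and this is sharp for such $\Gamma$), not the $O(d^{-1})$ plus $O(L^{-1}\times\text{log factor})$ that the target requires; so the claim that the $(\nabla\chi)\Phi_c$ term "produces the $1/(kd)$-type contributions" fails. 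Feeding $\|G\|_{H^1_k}\gtrsim d^{-1}P_n(L/d)$ through your interpolation yields an extra term $\sim P_n(L/d)^2/d$ in $\|G\|^2_{H^{1/2}_k}$, and in the regime $kd\sim P_n(L/d)^{-1}$ with $L/d\to\infty$ this exceeds the square of the right-hand side of \eqref{eqn:H_one_half_Phi} by an unbounded factor $\sim P_n(L/d)$, so no uniform constant $C$ can be extracted. The remedy is not to localise to a neighbourhood of $\Gamma$ at all: take the cutoff equal to $1$ on the whole ball of radius $L$ containing $\tGamma$ except near the singular point, i.e.\ a product of an outer cutoff varying at scale $L$ and an inner factor $1-\chi(2r(\tby)/d)$ whose gradient is $O(1/d)$ only on $\{d/2\leq r\leq d\}$, a set of measure $\lesssim d^{n-1}$. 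This is exactly the paper's $\chi_L$ in the case $d<4L$, and it is what keeps the $\tilde u\nabla\chi_L$ term down to the size recorded in \rf{D3}.
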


\begin{proof}
Choose $\chi\in C^\infty(\R)$ such that $\chi(t) = 0$, for $t\geq 2$, $\chi(t) = 1$, for $t\leq 1$. In both parts (i) and (ii) we wish to estimate $\normt{u}{H^{1/2}(\Gamma)}$, where $u\in H^{1/2}(\Gamma)$ is such that  $u=\tilde{u}|_\Gamma$ for some $\tilde{u}\in L^1_{\rm loc}(\R^{n-1})$ with $\tilde{u}|_{\overline \Gamma}\in \scrD(\overline \Gamma)$.

 Consider first part (i), in which, for some $\bd\in \R^n$ with $|\bd|\leq 1$, $\tilde{u}(\tilde \by)= \re^{\ri k\bd\cdot \by}$, for $\tilde \by\in \R^{n-1}$, where $\by = (\tilde \by, 0)\in \Gamma_\infty$. Suppose without loss of generality that the origin lies within $\Gamma$, and define $\chi_L\in \scrD(\R^{n-1})$ by $\chi_L(\tilde \by) = \chi(|\tilde \by|/L)$, for $\tilde \by\in \R^{n-1}$. Then, for any $s\in \R$, since $(\chi_L \tilde u)|_\Gamma = u$,
\begin{align}
\label{uChi}
\normt{u}{H^{s}_k(\Gamma)} \leq \normt{\chi_L \tilde{u}}{H^{s}_k(\R^{n-1})}.
\end{align}
Moreover, the standard shift and scaling theorems for the Fourier transform imply that $\widehat{\chi_L \tilde{u}}(\bxi) = L^{n-1}\widehat{\chi}(L(\bxi-k\bd))$. Thus, for $s\geq 0$, substituting $\boldeta=L(\bxi-k\bd)$,
\begin{align*}
\label{}
\normt{\chi_L \tilde{u}}{H^{s}_k(\R^{n-1})}^2 &= L^{2(n-1)}\int_{\R^{n-1}} (k^2+|\bxi|^2)^{s}|\widehat{\chi}(L(\bxi-k\bd))|^2 \, \rd\bxi\notag\\
&= L^{n-1-2s}\int_{\R^{n-1}} ((kL)^2+|\boldeta+kL\bd|^2)^{s}|\widehat{\chi}(\boldeta)|^2 \, \rd\boldeta\notag\\
&\leq L^{n-1-2s}(1+kL)^{2s}\int_{\R^{n-1}} (2+|\boldeta|)^{2s}|\widehat{\chi}(\boldeta)|^2 \, \rd\boldeta\notag,
\end{align*}
since $2(kL)^2+2|\boldeta|kL + |\boldeta|^2 \leq(1+kL)^{2}(2+|\boldeta|)^{2}$.

For part (ii), where, for some $\bx\in D$, $\tilde{u}(\tilde \by)= \Phi(\bx,(\tilde \by,0))$, for $\tilde \by\in \R^{n-1}$, and $u=\tilde u|_\Gamma$,
such a direct approach is not possible. Instead, noting that $u\in H^1(\Gamma)$, we will bound $\|u\|_{H^{1/2}_k(\Gamma)}$ using the simple estimate that
\begin{align}
\label{embed}
\|u\|_{H^{1/2}_k(\Gamma)}\leq k^{-1/2} \|u\|_{H^{1}_k(\Gamma)}.
\end{align}
Now, for $\tilde \by\in\R^{n-1}$, $\tilde u(\tilde \by) = k^{n-2}F(kr(\tilde \by))$, where $r(\tilde \by) = \sqrt{|\tilde \by-\tilde \bx|^2+x_n^2}$ and, for $t>0$, $F(t):= \frac{\ri}{4}H_0^{(1)}(t)$ when $n=2$, while $F(t) := \re^{\ri t}/(4\pi t)$ when $n=3$. Further, $|\nabla \tilde u(\tilde \by)|\leq k^{n-1} |F^\prime(kr(\tilde\by))|$. Recalling \rf{H0Est}-\rf{B0Der}, we see that
\begin{align}
\label{Fbounds}
|F(t)| \leq C t^{-(n-1)/2}, \quad |F^\prime(t)| \leq C \left(t^{-(n-1)/2}+t^{-(n-1)}\right), \quad \mbox{for } t>0,
\end{align}
where $C>0$ is an absolute constant, not necessarily the same at each occurrence.

Suppose first that $d=\dist(\bx, \Gamma)\geq 4L$. Then, where $\chi_L$ is defined as above,
\begin{align} \label{b1}
\|u\|^2_{H^{1}_k(\Gamma)} \leq \|\chi_L \tilde u\|^2_{H^{1}_k(\R^{n-1})} = k^2 \|\chi_L \tilde u\|^2_{L^2(\R^{n-1})}+\|\nabla(\chi_L \tilde u)\|^2_{L^2(\R^{n-1})}.
\end{align}
Note that $r(\tilde \by) \geq d-2L\geq d/2$ and $|\nabla \chi_L(\by)|\leq C/L$ for $\tilde \by\in \supp \chi_L$,  and that $\supp \chi_L$ has measure $\leq CL^{n-1}$. Thus, and using \rf{Fbounds} and \rf{b1}, and recalling that $|F(z)|$ is monotonically decreasing on $z>0$, we see that
\begin{align} \label{finalfar}
\|u\|_{H^{1}_k(\Gamma)} \leq C L^{(n-1)/2} \left( L^{-1}k^{n-2}|F(kd/2)| + (k/d)^{(n-1)/2} + d^{1-n}\right).
\end{align}

Now consider the case that $d< 4L$.  Write $\bx$ as $\bx=(\tilde \bx,x_n)$ with $\tilde\bx\in \R^{n-1}$, and set $S_R:=\{\tilde \by\in \R^{n-1}:|\tilde \bx-\tilde \by|<R\}$, for $R>0$. Modify the definition of $\chi_L$, setting $\chi_L(\tilde \by) = \chi(|\tilde \by|/L)(1-\chi(2 r(\tilde\by)/d))$. With this definition it still holds that $\chi_L=1$ on the closure of $\Gamma$, so $(\chi_L\tilde u)|_\Gamma = u$. Also  $|\nabla \chi_L(\tilde \by)|\leq C/d$; in fact $|\nabla \chi_L(\tilde \by)|\leq C/L$, for $|\tilde \by-\tilde \bx|\geq d$. Again \rf{b1} holds.  Recalling that $|\nabla \tilde u(\by)|\leq k^{n-1} |F^\prime(kr(\tilde\by))|$, and using \rf{Fbounds} and that $r(\tilde \by)\geq |\tilde \by-\tilde \bx|$, and $r(\tilde \by)\geq d/2$ on the support of $\chi_L$, we see that
\begin{align} \notag
\|\chi_L\nabla \tilde u\|^2_{L^2(\R^{n-1})} &\leq k^{2n-2}\left(\int_{S_d}\chi^2_L(\tilde \by)|F^\prime(kr(\tilde\by))|^2 \rd \tilde \by +C\int_{S_{5L}\setminus S_{d}}|F^\prime(kr(\tilde\by))|^2 \rd \tilde \by\right) \\ \notag
& \leq Ck^{n-1} + C d^{1-n} + C \int_{d}^{5L}\left(\frac{k^{n-1}}{t}+\frac{1}{t^n}\right) \rd t\\ \label{D1}
& \leq C k^{n-1} \log(5L/d) + C d^{1-n}.
\end{align}
Similarly,
\begin{align} \notag
k^2\|\chi_L\tilde u\|^2_{L^2(\R^{n-1})} &\leq k^{2n-2}\left(\int_{S_d}\chi^2_L(\tilde \by)|F(kr(\tilde\by))|^2 \rd \tilde \by + C\int_{S_{5L}\setminus S_d}|F(kr(\tilde\by))|^2 \rd \tilde \by\right)\\  \label{D2}
& \leq C k^{n-1}\left(1 + \int_d^{5L}t^{-1}\,\rd t\right) \leq C k^{n-1} \log(5L/d)
\end{align}
 and, using that $|F(z)|$ is monotonic for $z>0$,
\begin{align} \notag
\|\tilde u\nabla\chi_L\|^2_{L^2(\R^{n-1})} &\leq Ck^{2n-4}\left(d^{n-3}|F(kd/2)|^2 + L^{-2}\int_{S_{5L}\setminus S_d}|F(kr(\tilde\by))|^2 \rd \tilde \by\right)\\ \label{D3}
&\leq C k^{2n-4}d^{n-3}|F(kd/2)|^2 + C k^{n-3} L^{-2}\log(5L/d).
\end{align}
Combining \rf{b1} and \rf{D1}--\rf{D3}, we see that, for $d<4L$,
\begin{align} 
\|u\|_{H^{1}_k(\Gamma)} & \leq C k^{(n-3)/2}(k+L^{-1})\log^{1/2}(5L/d) \notag \\
& \qquad \qquad + C d^{(1-n)/2} + C k^{n-2}d^{(n-3)/2}|F(kd/2)|.
\label{D4}
\end{align}

Now, in the case $n=3$, for which $|F(kd/2)|\leq C/(kd)$, it follows from \rf{finalfar} and \rf{D4}, and noting that $\log^{1/2}(5L/d) \leq CP_3(L/d) \leq CL/d$ for $4L>d$, that
\begin{align} \label{F1}
\|u\|_{H^{1}_k(\Gamma)} \leq \frac{C}{d}\Big(1+kd\,P_3(L/d)\Big).
\end{align}
 For $n=2$, $F(kd/2) \leq C\log(2+(kd)^{-1})(1+kd)^{-1/2}\leq C(kd)^{-1/2}$,  by \rf{H0Est} and \rf{BnEst}. Hence, and by \rf{finalfar} and \rf{D4} and as $\log^{1/2}(5L/d) \leq CP_2(L/d) \leq CL/d$ for $4L>d$,
\begin{align} \label{F2}
\|u\|_{H^{1}_k(\Gamma)} \leq \frac{C}{d^{1/2}}\Big((kL)^{-1/2} + \log(2+(kd)^{-1}) +(kd)^{1/2}\,P_2(L/d)\Big).
\end{align}
Part (ii) of the lemma then follows from \rf{embed}, \rf{F1}, and \rf{F2}.
\end{proof}

As an application we use Lemma \ref{lem:H_one_half_norms} to prove a $k$-explicit pointwise bound on the solution of the sound-soft screen scattering problem considered in Example \ref{ex:scattering}.
\begin{cor}
\label{cor:BoundOnSolnInDomain}
The solution $u$ of problem $\sD$, with $g_{\sD}=-u^i|_\Gamma$, satisfies the pointwise bound
\begin{align*}
\label{}
|u(\bx)|\leq
\begin{cases}
 \displaystyle{C\sqrt{kL}\sqrt{1+kL}\left(\frac{1}{kd}+P_3(L/d)\right)}, & n=3,\\[3mm]
\displaystyle{C\sqrt{1+kL}\left(\frac{1}{\sqrt{kd}}\left(\frac{1}{\sqrt{kL}} + \log\left(2+\frac{1}{kd}\right)\right) +P_2(L/d)\right)}, & n=2,
\end{cases}
\end{align*}
where $\bx\in D$, $d:=\dist(\bx,\Gamma)$, $L:=\diam{\Gamma}$, and $C>0$ is independent of $k$, $\Gamma$ and $\bx$.
\end{cor}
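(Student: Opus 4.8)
The plan is to derive the bound by combining the single-layer representation of the solution from Theorem~\ref{DirEquivThm}, the coercivity estimate of Theorem~\ref{ThmCoerc}, and the two $H^{1/2}_k(\Gamma)$ norm bounds of Lemma~\ref{lem:H_one_half_norms}. By Theorem~\ref{DirEquivThm} (in the Lipschitz setting in which \eqref{eqn:SLPRep} holds), $u(\bx)=-\cS_k v(\bx)$ for $\bx\in D$, where $v:=[\partial u/\partial\bn]\in\tH^{-1/2}(\Gamma)$ is the unique solution of $-S_k v=g_\sD=-u^i|_\Gamma$, i.e.\ $S_k v=u^i|_\Gamma$. Since $\bx\notin\overline\Gamma$, the function $\Phi(\bx,\cdot)$ is smooth in a neighbourhood of $\Gamma$, so $\gamma^\pm(\rho\,\Phi(\bx,\cdot))|_\Gamma=\Phi(\bx,\cdot)|_\Gamma\in H^{1/2}(\Gamma)$, and the definition of $\cS_k$ gives the duality-pairing representation $u(\bx)=-\langle\Phi(\bx,\cdot)|_\Gamma,\overline v\rangle_{H^{1/2}(\Gamma)\times\tH^{-1/2}(\Gamma)}$. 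Applying Cauchy--Schwarz for the $k$-weighted duality pairing yields
\begin{align}
|u(\bx)|\leq \normt{\Phi(\bx,\cdot)}{H^{1/2}_k(\Gamma)}\,\normt{v}{\tH^{-1/2}_k(\Gamma)}.
\end{align}

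The first factor is controlled directly by Lemma~\ref{lem:H_one_half_norms}(ii). For the second, I would use the coercivity estimate \eqref{SCoercive} with $\phi=v$: since $a_\sD(v,v)=\langle S_k v,v\rangle_{H^{1/2}(\Gamma)\times\tH^{-1/2}(\Gamma)}=\langle u^i|_\Gamma,v\rangle_{H^{1/2}(\Gamma)\times\tH^{-1/2}(\Gamma)}$, Cauchy--Schwarz gives
\begin{align}
\tfrac{1}{2\sqrt2}\,\normt{v}{\tH^{-1/2}_k(\Gamma)}^2\leq |a_\sD(v,v)|\leq \normt{u^i|_\Gamma}{H^{1/2}_k(\Gamma)}\,\normt{v}{\tH^{-1/2}_k(\Gamma)},
\end{align}
and hence $\normt{v}{\tH^{-1/2}_k(\Gamma)}\leq 2\sqrt2\,\normt{u^i|_\Gamma}{H^{1/2}_k(\Gamma)}$. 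As $u^i|_\Gamma=\re^{\ri k\bd\cdot(\cdot)}$ with $|\bd|=1$, Lemma~\ref{lem:H_one_half_norms}(i) with $s=1/2$ gives $\normt{u^i|_\Gamma}{H^{1/2}_k(\Gamma)}\leq C\,L^{(n-2)/2}(1+kL)^{1/2}$.

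It then remains to multiply the three bounds and simplify. For $n=3$ the factor $L^{(n-2)/2}=L^{1/2}$ combines with the $\sqrt k$ appearing in Lemma~\ref{lem:H_one_half_norms}(ii) to produce the $\sqrt{kL}$ in the claimed estimate; for $n=2$ one has $L^{(n-2)/2}=1$ and the stated bound follows at once, with $C$ depending only on the dimension. I do not anticipate any serious obstacle: the two points needing a little care are the identification of the trace $\gamma^\pm(\rho\,\Phi(\bx,\cdot))|_\Gamma$ with $\Phi(\bx,\cdot)|_\Gamma$ — legitimate precisely because $\dist(\bx,\Gamma)>0$ — and the bookkeeping of the powers of $L$ emerging from Lemma~\ref{lem:H_one_half_norms}(i); both are routine.
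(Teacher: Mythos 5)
Your proposal is correct and follows essentially the same route as the paper: represent $u(\bx)$ as a duality pairing of $\Phi(\bx,\cdot)$ with $[\partial u/\partial\bn]$, bound the density via coercivity of $S_k$ (which is exactly the Lax--Milgram bound $\normt{S_k^{-1}}{H^{1/2}(\Gamma)\to\tH^{-1/2}(\Gamma)}\leq 2\sqrt{2}$ used in the paper), and estimate the two $H^{1/2}_k(\Gamma)$ norms by Lemma~\ref{lem:H_one_half_norms}. The bookkeeping of the powers of $L$ and $k$ is also carried out correctly.
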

\begin{proof}
Using Theorem \ref{DirEquivThm} we can estimate
\begin{align*}
\label{}
|u(\bx)|
&= \left| \cS_k \left[\pdonetext{u}{\bn}\right](\bx)\right| \\
&= \left|\langle \Phi(\bx,\cdot),\overline{\left[\pdonetext{u}{\bn}\right]} \rangle_{H^{1/2}(\Gamma)\times \tilde{H}^{-1/2}(\Gamma)}\right|\\
&\leq \norm{\Phi(\bx,\cdot)}{H^{1/2}(\Gamma)}\norm{S_k^{-1}}{H^{1/2}(\Gamma)\to\tilde{H}^{-1/2}(\Gamma)}\norm{u^i|_\Gamma}{H^{1/2}(\Gamma)},
\end{align*}
and the result follows from applying Lemma \ref{lem:H_one_half_norms} to estimate the first and third factors, and using Theorem \ref{ThmCoerc} (and the Lax Milgram lemma) which give the bound $\norm{S_k^{-1}}{H^{1/2}(\Gamma)\to\tilde{H}^{-1/2}(\Gamma)}\leq 2\sqrt{2}$.
\end{proof}

\section{Acknowledgements}
This work was supported by EPSRC grant EP/F067798/1. The authors are grateful to A.\ Moiola for many stimulating discussions in relation to this work.
\bibliography{BEMbib_short2014}%

\begin{thebibliography}{10}

\bibitem{DLMF}
{\em {D}igital {L}ibrary of {M}athematical {F}unctions}.
\newblock National Institute of Standards and Technology, from
  http://dlmf.nist.gov/, release date: 2010-05-07.

\bibitem{BeChGrLaLi:11}
{\sc T.~Betcke, S.~N. Chandler-Wilde, I.~G. Graham, S.~Langdon, and
  M.~Lindner}, {\em Condition number estimates for combined potential boundary
  integral operators in acoustics and their boundary element discretisation},
  Numer. Meth. PDEs, 27 (2011), pp.~31--69.

\bibitem{BraWer:65}
{\sc H.~Brakhage and P.~Werner}, {\em {\"U}ber das {D}irichletsche
  {A}ussenraumproblem f{\"u}r die {H}elmholtzsche {S}chwingungsgleichung},
  Archiv der Mathematik, 16 (1965), pp.~325--329.

\bibitem{CWGLL09}
{\sc S.~N. Chandler-Wilde, I.~G. Graham, S.~Langdon, and M.~Lindner}, {\em
  Condition number estimates for combined potential boundary integral operators
  in acoustic scattering}, J. Integr. Equat. Appl., 21 (2009), pp.~229--279.

\bibitem{ChGrLaSp:11}
{\sc S.~N. Chandler-Wilde, I.~G. Graham, S.~Langdon, and E.~A. Spence}, {\em
  Numerical-asymptotic boundary integral methods in high-frequency acoustic
  scattering}, Acta Numer., 21 (2012), pp.~89--305.

\bibitem{CoercScreen}
{\sc S.~N. Chandler-Wilde and D.~P. Hewett}, {\em Acoustic scattering by
  fractal screens: mathematical formulations and wavenumber-explicit continuity
  and coercivity estimates}.
\newblock University of Reading preprint MPS-2013-17.

\bibitem{NonConvex}
{\sc S.~N. Chandler-Wilde, D.~P. Hewett, S.~Langdon, and A.~Twigger}, {\em A
  high frequency boundary element method for scattering by a class of nonconvex
  obstacles}, Numer. Math., 129 (2015), pp.~647--689.

\bibitem{ChaHewMoi:13}
{\sc S.~N. Chandler-Wilde, D.~P. Hewett, and A.~Moiola}, {\em Sobolev spaces on
  subsets of $\mathbb{R}^n$ with application to boundary integral equations on
  fractal screens}.
\newblock {I}n preparation.

\bibitem{CoKr:83}
{\sc D.~L. Colton and R.~Kress}, {\em Integral equation methods in scattering
  theory}, John Wiley \& Sons Inc., New York, 1983.

\bibitem{Co:04}
{\sc M.~Costabel}, {\em Time-dependent problems with the boundary integral
  equation method}, Encyclopedia of Computational Mechanics,  (2004).

\bibitem{DavisChew08}
{\sc C.~P. Davis and W.~C. Chew}, {\em Frequency-independent scattering from a
  flat strip with {$TE_z$}-polarized fields}, IEEE Trans. Ant. Prop., 56
  (2008), pp.~1008--1016.

\bibitem{DLaWroPowMan:98}
{\sc L.~A. De~Lacerda, L.~C. Wrobel, H.~Power, and W.~J. Mansur}, {\em A novel
  boundary integral formulation for three-dimensional analysis of thin acoustic
  barriers over an impedance plane}, J. Acoust. Soc. Am., 104 (1998),
  pp.~671--678.

\bibitem{Ryzhik}
{\sc I.~S. Gradshteyn and I.~M. Ryzhik}, {\em Table of Integrals, Series and
  Products}, Academic Press Inc, 1993.

\bibitem{Grafakos}
{\sc L.~Grafakos}, {\em Classical Fourier Analysis}, Springer, 2008.

\bibitem{GrLoMeSp:13}
{\sc I.~G. Graham, M.~L{\"o}hndorf, J.~M. Melenk, and E.~A. Spence}, {\em When
  is the error in the $h$-{BEM} for solving the {H}elmholtz equation bounded
  independently of $k$?}, BIT Numer. Math., 55 (2015), pp.~171--214.

\bibitem{Ha-Du:90}
{\sc T.~Ha-Duong}, {\em On the transient acoustic scattering by a flat object},
  Japan J. Ind. Appl. Math., 7 (1990), pp.~489--513.

\bibitem{Ha-Du:92}
\leavevmode\vrule height 2pt depth -1.6pt width 23pt, {\em On the boundary
  integral equations for the crack opening displacement of flat cracks},
  Integr. Equat. Oper. Th., 15 (1992), pp.~427--453.

\bibitem{HanTacyGalkowski14}
{\sc X.~Han and M.~Tacy}, {\em Semiclassical single and double layer
  potentials: boundedness and sharpness}, 2014.
\newblock With an Appendix by J. Galkowski, http://arxiv.org/abs/1403.6576.

\bibitem{HeCh:13}
{\sc D.~P. Hewett and S.~N. Chandler-Wilde}, {\em Wavenumber-explicit
  coercivity estimates in scattering by screens}, in Proc.\ 11th Int.\ Conf.\
  on Mathematical and Numerical Aspects of Waves, Gammarth, Tunisia, Available
  from http://www.enit.rnu.tn/waves2013, 2013, pp.~253--254.

\bibitem{ScreenBEM}
{\sc D.~P. Hewett, S.~Langdon, and S.~N. Chandler-Wilde}, {\em A
  frequency-independent boundary element method for scattering by
  two-dimensional screens and apertures}.
\newblock IMA J. Numer. Anal., published online 16 October 2014, doi:
  10.1093/imanum/dru043.

\bibitem{DHSLMM}
{\sc D.~P. Hewett, S.~L. Langdon, and J.~M. Melenk}, {\em A high frequency $hp$
  boundary element method for scattering by convex polygons}, SIAM J. Numer.
  Anal., 51(1) (2013), pp.~629--653.

\bibitem{HoMaSt:96}
{\sc H.~Holm, M.~Maischak, and E.~P. Stephan}, {\em The $hp$-version of the
  boundary element method for {H}elmholtz screen problems}, Computing, 57
  (1996), pp.~105--134.

\bibitem{Ih:98}
{\sc F.~Ihlenburg}, {\em Finite element analysis of acoustic scattering},
  vol.~132, Springer Verlag, 1998.

\bibitem{LohMel:11}
{\sc M.~L{\"o}hndorf and J.~M. Melenk}, {\em Wavenumber-explicit $hp$-{BEM} for
  high frequency scattering}, SIAM J. Numer. Anal., 49 (2011), pp.~2340--2363.

\bibitem{McLean}
{\sc W.~McLean}, {\em Strongly Elliptic Systems and Boundary Integral
  Equations}, CUP, 2000.

\bibitem{SCWGS11}
{\sc E.~A. Spence, S.~N. Chandler-Wilde, I.~G. Graham, and V.~P. Smyshlyaev},
  {\em A new frequency-uniform coercive boundary integral equation for acoustic
  scattering}, Comm. Pure Appl. Math., 64 (2011), pp.~1384--1415.

\bibitem{SpSm:11}
{\sc E.~A. Spence, I.~V. Kamotski, and V.~P. Smyshlyaev}, {\em Coercivity of
  combined boundary integral equations in high frequency scattering}.
\newblock Comm. Pure Appl. Math., published online 6 October 2014, doi:
  10.1002/cpa.21543.

\bibitem{Ste:87}
{\sc E.~P. Stephan}, {\em Boundary integral equations for screen problems in
  $\mathbb{R}^3$}, Integr. Equat. Oper. Th., 10 (1987), pp.~236--257.

\bibitem{StWe84}
{\sc E.~P. Stephan and W.~L. Wendland}, {\em An augmented {G}alerkin procedure
  for the boundary integral method applied to two-dimensional screen and crack
  problems}, Appl. Anal., 18 (1984), pp.~183--219.

\bibitem{Wa:44}
{\sc G.~N. Watson}, {\em A Treatise on the Theory of Bessel Functions}, CUP,
  2nd ed., 1944.

\bibitem{WeSt:90}
{\sc W.~L. Wendland and E.~P. Stephan}, {\em A hypersingular boundary integral
  method for two-dimensional screen and crack problems}, Arch. Rational Mech.
  Anal., 112 (1990), pp.~363--390.

\bibitem{Zemanian}
{\sc A.~H. Zemanian}, {\em Distribution Theory and Transform Analysis}, Dover,
  1987.

\end{thebibliography}
\bibliographystyle{siam}
\end{document}